\setlist[enumerate]{itemsep=2pt,parsep=2pt,before={\parskip=2pt}}
\let\amsamp=&
\newcommand{\colim@}[2]{%
  \vtop{\m@th\ialign{##\cr
    \hfil$#1\operator@font colim$\hfil\cr
    \noalign{\nointerlineskip\kern1.5\ex@}#2\cr
    \noalign{\nointerlineskip\kern-\ex@}\cr}}%
}
\newtheorem{theorem}{Theorem}[section]
\newtheorem*{theorem*}{Theorem}
\newtheorem*{definition*}{Definition}
\newtheorem{proposition}[theorem]{Proposition}
\newtheorem{lemma}[theorem]{Lemma}
\newtheorem{corollary}[theorem]{Corollary}
\theoremstyle{definition}
\newtheorem{definition}[theorem]{Definition}
\newtheorem{example}[theorem]{Example}
\newtheorem{claim}[theorem]{Claim}
\renewcommand{\lim}{\varprojlim}
\newcommand{\coker}[1]{\text{coker}(#1)}
\renewcommand{\to}{\rightarrow}
\renewcommand{\over}[2]{\stackrel{#1}{#2}}
\newcommand{\otimesl}{\otimes^{\textbf{L}}}
\newcommand{\bb}[1]{\mathbb{#1}}
\newcommand{\colim}[1]{\underset{#1}{\text{colim}} \;}
\newcommand{\cl}[1]{\text{cl}(#1)}
\begin{document}
\title{Indivisible sequences and descendability}
\author{Ivan Zelich}
\begin{abstract}
We introduce the notion of \textit{indivisible sequences} and show that to any indivisible sequence $\{S, \Psi: S \to R\}$ we can associate faithfully flat ring maps $R \to R'$ that are not descendable. As a corollary, we obtain the first example of a faithfully flat ring map between $\aleph_{n-1}$-countable rings that has descendability exponent $n$, and indeed a faithfully flat ring map between $\aleph_{\omega}$-countable rings that is not descendable.
\end{abstract}
\maketitle
\section{Introduction}
This paper analyses \textit{indivisible sequences}, a structure that proved useful in constructing faithfully flat ring maps that were not descendable, see \cite{zelich}. In particular, our main theorem can be summarised as follows.
\begin{theorem}[See Theorem~\ref{theorem:mainindiv}]\label{intro1}
Let $R$ be a ring that admits an an indivisible sequence $\{\{S_i, \Psi_i: S_i \to R\}\}_{i \in \bb{N}}$. Then if $|S_i| \ge \aleph_{i-1}$ for all $i \in \bb{N}$, there exists a faithfully flat $R$-algebra map $A \to A'$ that is not descendable with $|A|=|A'|=|R|$.
\end{theorem}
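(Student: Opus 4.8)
The plan is to reduce the theorem to a level-by-level non-vanishing statement, and to check faithful flatness and the cardinality count by hand. Recall that a map of commutative rings $A \to A'$ is descendable precisely when $A$ lies in the thick $\otimes$-ideal $\langle A' \rangle$ of $D(A)$ generated by $A'$; equivalently, writing $I := \mathrm{fib}(A \to A') \in D(A)$ for the augmentation fibre, when the canonical maps $I^{\otimes_A k} \to A$ are null-homotopic for $k \gg 0$, the least such $k$ being the descendability exponent $e(A \to A')$. So it suffices to produce a faithfully flat $A \to A'$ with $|A| = |A'| = |R|$ and $e(A \to A') = \infty$.

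I would take $A := R$ and
\[
A' := \operatorname*{colim}_{N \in \bb{N}}\, \big( B_1 \otimes_R B_2 \otimes_R \cdots \otimes_R B_N \big),
\]
the infinite tensor product of faithfully flat $R$-algebras $B_i$ extracted from the data $\Psi_i \colon S_i \to R$ of the indivisible sequence; the relevant feature of the construction is that $\mathrm{fib}(R \to B_i)$ has an explicit description in terms of $\Psi_i$. Faithful flatness of $R \to A'$ is then automatic: a finite tensor product over $R$ of faithfully flat $R$-algebras is faithfully flat, and a filtered colimit of faithfully flat $R$-algebras along unital transition maps is faithfully flat. For cardinalities, the data may be arranged with $|S_i| \le |R|$ (e.g.\ each $\Psi_i$ injective), so $|B_i| \le \max(|R|, |S_i|, \aleph_0) = |R|$; a finite tensor product over $R$ of $R$-algebras of cardinality $\le |R|$ again has cardinality $\le |R|$, and a countable filtered colimit of these remains of cardinality $\le |R|$; since $A'$ is a nonzero $R$-algebra this forces $|A| = |A'| = |R|$. (The hypotheses $\aleph_{i-1} \le |S_i| \le |R|$ force $|R| \ge \aleph_\omega$, which is why the corollary on genuinely non-descendable maps lands at $\aleph_\omega$.)

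Non-descendability then reduces, in two steps, to one estimate about the $B_i$. First, for each fixed $n$, reordering tensor factors (and using that $-\otimes_R-$ commutes with filtered colimits) exhibits $A' \cong B_n \otimes_R C_n$ with $C_n := \operatorname*{colim}_{N \ge n} \bigotimes_{i \le N,\, i \ne n} B_i$ a faithfully flat $R$-algebra. Second, for any $R$-algebra $Y$ one has $e(R \to X) \le e(R \to X \otimes_R Y)$: the $R$-algebra map $X \to X \otimes_R Y$ makes the structure map $\mathrm{fib}(R \to X) \to R$ factor through $\mathrm{fib}(R \to X \otimes_R Y) \to R$, hence $\mathrm{fib}(R \to X)^{\otimes_R k} \to R$ factors through $\mathrm{fib}(R \to X \otimes_R Y)^{\otimes_R k} \to R$, so nullity of the latter forces nullity of the former. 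Taking $X = B_n$, $Y = C_n$ gives $e(R \to A') \ge e(R \to B_n)$ for every $n$; hence it suffices to show $e(R \to B_n) \ge n$ for all $n$, and then $e(R \to A') = \infty$ and $R \to A'$ is not descendable.

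The bound $e(R \to B_n) \ge n$ --- equivalently, non-nullity of $\mathrm{fib}(R \to B_n)^{\otimes_R(n-1)} \to R$ --- is the crux, and where I expect the real obstacle to be. I would unwind the explicit description of $\mathrm{fib}(R \to B_n)$ and identify that length-$(n-1)$ composite with an Amitsur-type ``cocycle'' built out of $\Psi_n \colon S_n \to R$; indivisibility of the sequence at level $n$ is exactly the statement that this cocycle does not bound, while $|S_n| \ge \aleph_{n-1}$ supplies precisely the $\aleph_{n-1}$ ``independent coordinates'' that an $(n-1)$-step non-bounding argument consumes --- with fewer, the obstruction would collapse. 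Turning the combinatorial indivisibility hypothesis into this homological non-vanishing, with the cardinal $\aleph_{n-1}$ entering as the exact amount of room the argument requires, is the heart of the matter; the faithful flatness and cardinality bookkeeping above are, by comparison, routine.
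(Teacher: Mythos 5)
There is a genuine gap, and it is in the reduction, not just in the deferred ``crux''. You reduce non-descendability of $R \to A' = \operatorname{colim}_N B_1\otimes_R\cdots\otimes_R B_N$ to the claim $e(R\to B_n)\ge n$ for the \emph{single} factor $B_n$ built from $\Psi_n\colon S_n\to R$ alone. That target is false for the construction the hypotheses support: each individual $B_i$ carries only a degree-one obstruction. In the paper's realisation, $B_i=\text{Sym}_R(R\oplus\bigoplus_{S_i}R)$ is a polynomial $R$-algebra, so $R\to B_i$ admits an $R$-algebra retraction and has descendability exponent $1$; your monotonicity $e(R\to X)\le e(R\to X\otimes_R Y)$ (which is correct) then yields only $e(R\to A')\ge 1$. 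Relatedly, there is no standalone ``indivisibility at level $n$'' of $\Psi_n$ that could make a single cocycle fail to bound in degree $n$: $1$-indivisibility of $\{S_n,\Psi_n\}$ is just universal injectivity of $\Psi_{R,S_n}$, while the $n$-indivisibility condition is a \emph{joint} condition on all $n$ maps (that $\{\Psi_i(s_i)\}_{i\in[n]}$ never generate the unit ideal for any choice of $s_i\in S_i$). The correct reduction is to the finite partial products: take $X=B_1\otimes_R\cdots\otimes_R B_n$ and $Y$ the remaining factors, so that it suffices to show $e$ of the $n$-fold product is $\ge n$ --- and that statement is the entire content of the theorem, not a preliminary.

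The paper proves that lower bound by tensoring together $n$ \emph{distinct} extension classes $\cl{\Psi_{R,S_i}}\in\text{Ext}^1_R(\coker{\Psi_{R,S_i}},R)$, one per factor, and showing the resulting class in $\text{Ext}^n$ is non-zero (Theorem~\ref{theorem:indivisiblecup}); this is transported to the descendability exponent of $\otimes_i f_i$ via Proposition~\ref{proposition:cup} and Theorem~\ref{theorem:indivalg}, with faithful flatness supplied by Proposition~\ref{proposition:symformula}. The non-vanishing itself reduces (Lemma~\ref{lemma:extcrit}) to the non-existence of a splitting $r$, which is ruled out by the set-theoretic Proposition~\ref{proposition:setbounds}: an inductive counting argument that uses the cardinals $|S_n|\ge\aleph_{n-1}, |S_{n-1}|\ge\aleph_{n-2},\dots$ \emph{across all coordinates simultaneously} to find a point $s\in\prod_i S_i$ killing all the finite-support correction terms at once, after which the joint non-unit-ideal condition gives the contradiction. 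None of this can be run ``one $\Psi_n$ at a time'', so your plan is missing the central mechanism rather than deferring a technical estimate. (Your faithful-flatness and cardinality bookkeeping, and the filtered-colimit framing, are fine and essentially match how the paper assembles the $n=\infty$ case.)
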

As a corollary, we obtain:
\begin{theorem}[See Corollary~\ref{corollary:mainindiv}]\label{intro2}
For any $n \in \bb{N} \cup \{\infty\}$, there exists a faithfully flat ring map $A \to A'$ between $\aleph_{n-1}$-Noetherian rings, with $A$ of Krull-dimension $n$, which has descendability exponent $n$. \\
\indent Furthermore, there exists a faithfully flat ring map $A \to A'$ between $\text{min}(\beth_{n-1},2^{\aleph_{n-1}})$-countable $p$-boolean rings which has descendability exponent $n$.
\end{theorem}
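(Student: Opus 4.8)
The plan is to deduce both assertions from Theorem~\ref{theorem:mainindiv}, in the refined form (recorded in the body) that reads the descendability exponent off the length of the indivisible sequence, applied to carefully chosen base rings, and --- for finite $n$ --- to pin the exponent down to exactly $n$ using a matching upper bound. For fixed $n \in \bb{N} \cup \{\infty\}$ the steps are: (i) produce a ring $R$ in the required class --- Noetherian of Krull dimension $n$ for the first assertion, $p$-boolean for the second --- of cardinality $\aleph_{n-1}$, resp.\ $\min(\beth_{n-1}, 2^{\aleph_{n-1}})$ (reading $\aleph_{n-1} := \aleph_\omega$ when $n = \infty$); (ii) equip $R$ with an indivisible sequence $\{S_i, \Psi_i : S_i \to R\}$ of length $n$ (indexed by $\bb{N}$ when $n = \infty$) with $|S_i| \ge \aleph_{i-1}$; (iii) apply Theorem~\ref{theorem:mainindiv} to get a faithfully flat $R$-algebra map $A \to A'$ with $|A| = |A'| = |R|$ and descendability exponent $\ge n$ (not descendable when $n = \infty$), checking that $A, A'$ stay in the relevant class; (iv) for finite $n$, invoke the matching upper bound on the descendability exponent of faithfully flat maps between $\aleph_{n-1}$-countable rings, so that equality holds.

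For the Noetherian assertion I would fix a field $k$ with $|k| = \aleph_{n-1}$ and take $R = k[x_1, \dots, x_n]$ (or its localization at the origin), which is Noetherian of Krull dimension $n$ and cardinality $\aleph_{n-1}$. The indivisible sequence should be built from the flag of coordinate subvarieties: take $S_i$ to be (a localization of) the coordinate subring $k[x_1, \dots, x_{n-i}]$, or dually the quotient $R/(x_1, \dots, x_i)$, with its structure map to $R$, each of cardinality $\aleph_{n-1} \ge \aleph_{i-1}$; the indivisibility should encode that no finite stage of the Amitsur complex of the resulting extension captures the whole flag. One then checks, from the explicit form of the construction in Theorem~\ref{theorem:mainindiv}, that $A$ and $A'$ remain Noetherian of Krull dimension $n$ for this input.

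For the $p$-boolean assertion I would take $R = C(X_{n-1}, \bb{F}_p)$, the $\bb{F}_p$-algebra of locally constant functions on an iterated Stone space $X_{n-1}$: either the one obtained from $\bb{N}$ by $n-1$ successive Stone--\v{C}ech compactifications, so that $|R| = \beth_{n-1}$, or $X_{n-1} = \beta(\aleph_{n-1})$, so that $|R| = 2^{\aleph_{n-1}}$, using whichever is smaller. Each such $R$ is von Neumann regular with $x^p = x$ for all $x$, hence $p$-boolean of Krull dimension $0$, and the tower of inclusions $C(X_{i-1}, \bb{F}_p) \hookrightarrow R$ provides the candidate indivisible sequence, with $|S_i| \ge \beth_{i-1} \ge \aleph_{i-1}$. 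Since the construction of Theorem~\ref{theorem:mainindiv} can be carried out internally to $p$-boolean rings (imposing $x^p = x$ on the free algebras it produces), $A$ and $A'$ remain $p$-boolean of cardinality $|R|$.

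The main obstacle is step (ii): checking that these explicit data --- the coordinate flag on $k[x_1, \dots, x_n]$ and the Stone-space tower --- genuinely satisfy the defining axioms of an indivisible sequence. This is where the real content sits, and it is what forces the cardinality thresholds $\aleph_{i-1}$ to appear. A secondary point, for finite $n$, is making sure the single ring $R$ both carries a length-$n$ indivisible sequence (giving exponent $\ge n$) and falls under the hypotheses of the upper-bound theorem (giving exponent $\le n$), so that the exponent is exactly $n$; the case $n = \infty$ is then the painless limiting case, requiring no upper bound and with Theorem~\ref{theorem:mainindiv} directly producing a non-descendable faithfully flat map.
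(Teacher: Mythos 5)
Your high-level skeleton --- manufacture an $n$-indivisible sequence on a suitable ring $R$ and feed it to Theorem~\ref{theorem:mainindiv} --- is exactly the paper's route, and your choice of ambient rings ($k[x_1,\dots,x_n]$ over a field of cardinality $\aleph_{n-1}$; rings of $\bb{F}_p$-valued functions for the $p$-boolean case) is essentially the paper's as well. But the concrete indivisible sequences you propose do not satisfy the definition, and since you yourself identify step (ii) as ``where the real content sits,'' this is a genuine gap rather than an omitted routine verification. An $n$-indivisible sequence requires set maps $\Psi_i: S_i \to R$ such that (a) each associated map $\Psi_{R,S_i}$ is universally injective and (b) for \emph{every} choice of $s_i \in S_i$ the elements $\Psi_1(s_1),\dots,\Psi_n(s_n)$ fail to generate the unit ideal. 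Your ``flag of coordinate subrings $k[x_1,\dots,x_{n-i}] \hookrightarrow R$ with their structure maps'' violates (b) on the nose: $1$ lies in every subring and $\Psi_i(1)=1$ generates the unit ideal. The same objection kills the ``tower of inclusions $C(X_{i-1},\bb{F}_p) \hookrightarrow R$.'' The paper's actual choices (Example~\ref{example:indiv}) are different in kind: for the Noetherian case $S_i$ is the underlying \emph{set} of $k$ and $\Psi_i(a) = x_i - a$, so that the $\Psi_i(a_i)$ always generate a maximal (hence proper) ideal and $\coker{\Psi_{R,S_i}}$ is the flat module of partial fractions $\sum_a (x_i-a)^{-1}k[x_i]$; for the $p$-boolean case one takes a single family of orthogonal idempotents $\{e_s\}_{s\in S}$ with $\Psi(s) = 1-e_s$ and uses Proposition~\ref{proposition:indivequiv} to distribute $n$ copies of it over the $n$ tensor factors of $R^{\otimes_{\bb{F}_p} n}$ --- the $n$ ``directions'' live in distinct tensor factors, not in a nested tower inside one ring.

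Two smaller points. First, your step (iv) is not needed: Theorem~\ref{theorem:mainindiv} already asserts the exact exponent $n$ (the upper bound being the Gruson/Lurie projective-dimension result quoted in the introduction), so the corollary requires no separate matching bound. Second, the $\beth_{n-1}$ half of assertion (ii) is not automatic from ``iterated Stone spaces'': one must actually exhibit $\beth_{n-1}$ pairwise orthogonal idempotents inside a $p$-boolean ring of cardinality only $\beth_{n-1}$, which the paper does by an explicit induction ($R_m = \prod_{S_{m-1}} R_{m-1}$ with $S_m = \prod_{S_{m-1}} S_{m-1}$); your sketch does not supply this count.
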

Our first result is optimal in light of the fact that flat modules over $\aleph_{k}$-Noetherian rings have projective dimension $\le k$ \cite[Th\'{e}or\`{e}m 7.10]{GrusonFlat}, \cite[Lemma D.3.3.7]{LurieSGA}.\\
\indent Our second result would be optimal if we could arrange for the map $A \to A'$ to be between $p$-boolean rings of cardinality $ \le \aleph_{n-1}$. If we assume the generalised continuum hypothesis, i.e. that $\beth_{n-1}=\aleph_{n-1}$, then we obtain an optimal result. Relatedly, a recent result of Aoki (\cite{aoki}) has shown that, in ZFC, there exist faithfully flat ring maps $A \to A'$ between $p$-boolean rings of cardinality $< \aleph_{2n}$ that have descendability exponent $n$. It seems therefore plausible that the optimal bound should be achieveable in ZFC.
\subsection*{Acknwledgements}
I would like to thank my advisor Aise Johan De Jong for our many discussions, feedback on drafts of this paper, and for his constant support. I would also like to thank Juan Esteban Rodriguez Camargo for helpful discussions, especially with Proposition~\ref{proposition:symformula}.
\subsection*{Notation}
We fix some notation that will be used throughout this article.
\begin{enumerate}\label{notation}
\item[(i)] If $S$ is a set and $R$ a ring, define $\text{Hom}^{\text{fin}}_{\text{Set}}(S, R) \subset \text{Hom}_{\text{Set}}(S,R)$ to be the set-theoretic maps with finite support i.e. $f \in \text{Hom}^{\text{fin}}_{\text{Set}}(S, R)$ if there exists a subset $K \subset S$ with $|K| < \infty$ such that $f(S\setminus K) = 0$. For each $t \in S$, set $\delta_{s=t} \in \text{Hom}^{\text{fin}}_{\text{Set}}(S, R)$ to be the function that is $1$ if $s=t$ and $0$ else.
\item[(ii)]Further, if $S_i$ are sets indexed by the set $\{1,2,...,n\}$, then we will identify the vector $1_{s_1,s_2,...,s_n}$ in the nested direct sum $\oplus_{S_1} \oplus_{S_2} \oplus ... \oplus_{S_n} R$ as follows: Every vector in $v \in \oplus_{S_1} \oplus_{S_2} \oplus ... \oplus_{S_n} R$ can be identified as a function $f_{v} \in \text{Hom}^{\text{fin}}_{\text{Set}}(\prod_i S_i, R)$, and under this identification, $1_{s_1,...,s_n}$ corresponds to $\delta_{t=s}$ where $s=(s_1,s_2,...,s_n) \in \prod_i S_i$.
\item[(iii)]For any set $S$, we let $p^n_i: S^{\times n} \to S^{\times n-1}$ be defined as follows: For any $s = (s_1,s_2,..,s_n) \in S^{\times n}$,
\[p^n_i(s) = (s_1,s_2,...,s_{i-1},s_{i+1},...,s_n).\]
For a point $s = (s_1,...,s_{n-1}) \in S^{\times n-1}$, we consider the degeneracy map $t_i^{s}: S \to S^{\times n}$ which takes a point $s' \in S$ to the point:
\[(s_1, ..., s_{i-1}, s', s_{i},...,s_{n-1}) \in S^{\times n}.\]
\item[(iv)] For a ring $R$ be a ring and as set $S$, let $R[S]=\text{Sym}_R(\oplus_{s \in S} R) = R[X_s]_{s \in S}$ and $R\{S\} = R[X_s]_{s \in S}/I$ where $I$ is the ideal generated by $X_s^2 - X_s \in R[S]$. \\
\indent Let $B(S) = \{K \subset S| \; |K| < \infty\}$, which can be endowed with the structure of a group under $\cup$, with unit $\emptyset$, which moreover satisfies $x \cup x = x$ for any $x \in B(S)$. Then $R\{S\} \simeq R[B(S)]$, the group ring on $B(S)$, and so we have an $R$-linear decomposition $R\{S\} = \oplus_n R\{S\}_{=n}$, where $R\{S\}_{=n} = \oplus_{\underset{|K|=n}{K\subset S}} R$. Let $\iota_S: S\to B(S)$ be the set-theoretic inclusion viewing elements of $S$ as one-element subsets of $S$, and let $R(\iota_S): \oplus_S R \to R\{S\}$ be the corresponding identification of $\oplus_S R$ as the degree $1$ elements of $R\{S\}$. \\
\indent We remark that $B(S \amalg T) = B(S) \times B(T)$ and therefore $R\{S \amalg T\} \simeq R\{S\} \otimes_R R\{T\}$.
\item[(v)] For an (injective) $A$-linear map $f: M \to N$ of $A$-modules, we may form an exact sequence 
\[0 \to M \to N \to \coker{f} \to 0.\]
This corresponds to a class in $\text{Ext}^1_A(\coker{f}, M)$, which we denote by $\cl{f}$.\\
\indent For a ring map $f: A \to B$ that is faithfully flat, we say that $f$ has \textit{descendability exponent $n$} when $\text{cl}(f)^{\otimes_A n} \neq 0$ but $\text{cl}(f)^{\otimes_A n+1} = 0$.
\end{enumerate}
Let $A$ is a commutative ring. The derived category $D(A)$ of $A$-modules, regarded as a stable $\infty$-category, admits a symmetric monoidal structure given by $\otimesl_A$. Therefore, given $A$-modules $M,N,M',N'$, there is an $A$-bilinear pairing:
\[\text{RHom}_A(M,N) \times \text{RHom}_A(M',N') \to \text{RHom}_A(M \otimesl_A M', N \otimesl_A N').\]
Passing to the homotopy category, we obtain an $A$-bilinear pairing of $A$-modules:
\[\text{Ext}^0_A(M,N) \times \text{Ext}^0_A(M',N') \to \text{Ext}^0_A(M \otimesl_A M', N \otimesl_A N'),\]
and for a pair of $f \in \text{Ext}^0_A(M,N), g \in \text{Ext}^0_A(M',N')$, we will set $f \otimesl_A g \in \text{Ext}^0_A(M \otimesl_A M', N \otimesl_A N')$ to be the image of $(f,g)$ under the aforementioned pairing. Moreover, if $N=N'=A$, we will compose with the multiplication isomorphism $A \otimesl_A A \to A$ so that the target is $\text{Ext}^0_A(M \otimesl_A M', A)$, and note that flatness conditions on our modules remove the necessity for derived tensor products.
\section{Indivisible sequences and descendability}
For $n \in \bb{N}$, let $[n] = \{1,2,...,n\}$ and $[\infty] = \bb{N}$.
\begin{definition}
Let $R$ be a commutative ring. For a set $S$, and a set-theoretic map
\[\Psi: S \to R\]
we associate the $R$-linear map
\[\Psi_{R,S}: \oplus_{S} R \to R \oplus \bigoplus_{S} R\]
defined by $\Psi_{R,S}(1_s)_{0}= 1$ and $\Psi_{R,S}(1_{s})_{s'} = \delta_{s'=s}$, where we regard the factor not indexed by $S$ on the right hand side as being in degree $0$.\\
\indent A \textit{$1$-indivisible sequence} is the data $\{S, \Psi: S \to R\}$ where $S$ is a set and $\Phi:S \to R$ a set-theoretic map such that the associated map $\Psi_{R,S}$ is an \textit{$R$-universally injective map}, in the sense that $\Psi_{R,S} \otimes_{R,f}R'$ is injective for any ring map $R \over{f}{\to} R'$. Equivalently, $\Psi_{R,S}$ is injective and $\coker{\Psi_{R,S}}$ is $R$-flat (as $\text{Tor}^1_R(\coker{\Psi_{R,S}}, R')=0$ for any ring map $R \to R'$).\\
\indent For any $n \in \bb{N} \cup \{\infty\}$, an \textit{$n$-indivisible sequence} is the data $\{\{S_i, \Psi_i:S_i \to R\}\}_{i \in [n]}$ where each $\{S_i, \Psi_i: S_i\to R\}$ is a $1$-indivisible sequence, and the elements $\{\Psi_i(s_i)\}_{i \in [n]}$ of $R$ do not generate the unit ideal for any choice of elements $s_i \in S_i$. An $\infty$-indivisible sequence of $R$ will simply be called an \textit{indivisible sequence} of $R$.
\end{definition}
While it may not be immediately obvious, there are many examples of rings $R$ with $n$-indivisible sequences. First, there is a general way to produce $n$-indivisible sequences from $1$-indivisible sequences.
\begin{proposition}\label{proposition:indivequiv}
Let $k$ be a commutative ring and $f: R \to R'$ a map of $k$-algebras, and for any $i \in [n]$, let $t_i: R \to R^{\otimes_k n}$ be the inclusion by $1$ into the $i^{\text{th}}$ factor.
\begin{enumerate}
\item[(i)] $\Psi_{R,S} \otimes_{R,f} R' = (f \circ \Psi)_{R',S}$. In particular, if $\{S, \Psi: S \to R\}$ is $1$-indivisible, then so is $\{S, f \circ \Psi: S \to R'\}$.
\item[(ii)] For each $n \in \bb{N} \cup \{\infty\}$ and $i \in [n]$, consider $1$-indivisible sequences $\{S_i, \Psi_i:S_i \to R\}$ such that for every $s_i \in S_i$, the composed map 
\[k \to R \to R/\Psi_i(s_i)\]
is faithfully flat. Then $\{\{S_i, t_i \circ \Psi_i: S_i \to R^{\otimes_k n}\}\}_{i \in [n]}$ is an $n$-indivisible sequence.
\end{enumerate}
\end{proposition}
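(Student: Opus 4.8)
The plan for (i) is to unwind the definitions directly. Since tensoring with $R'$ over $R$ commutes with direct sums, there are natural identifications $(\oplus_S R)\otimes_{R,f}R'=\oplus_S R'$ and $(R\oplus\bigoplus_S R)\otimes_{R,f}R'=R'\oplus\bigoplus_S R'$, and under these the base-changed map $\Psi_{R,S}\otimes_{R,f}R'$ is given on the generators $1_s$ by the very formula defining $(f\circ\Psi)_{R',S}$ (one only uses $f(1)=1$ and $f(\Psi(s))=(f\circ\Psi)(s)$); so the two maps coincide. For the ``in particular'' clause I would invoke stability of universal injectivity under base change: for any ring map $g\colon R'\to R''$, applying the identity just proven twice yields $(f\circ\Psi)_{R',S}\otimes_{R',g}R''=(g\circ f\circ\Psi)_{R'',S}=\Psi_{R,S}\otimes_{R,g\circ f}R''$, which is injective since $\{S,\Psi\}$ is $1$-indivisible; hence $(f\circ\Psi)_{R',S}$ is $R'$-universally injective, i.e.\ $\{S,f\circ\Psi\}$ is $1$-indivisible.

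For (ii) there are two conditions to verify. That each $\{S_i,\,t_i\circ\Psi_i\colon S_i\to R^{\otimes_k n}\}$ is $1$-indivisible is immediate from (i) applied to the $k$-algebra map $f=t_i$. The substantive point is the non-unit-ideal condition. Fix $s_i\in S_i$ for each $i\in[n]$ and suppose, for contradiction, that the elements $t_i(\Psi_i(s_i))$ generate the unit ideal of $R^{\otimes_k n}$. If $n=\infty$, the resulting relation $1=\sum_i a_i\,t_i(\Psi_i(s_i))$ is a finite sum (the unit ideal being finitely generated), hence already an equation in $R^{\otimes_k m}$ for $m$ large, as $R^{\otimes_k\infty}=\colim{m}R^{\otimes_k m}$ is a filtered colimit; so it suffices to treat finite $n$.

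For finite $n$ I would use the standard isomorphism of $k$-algebras
\[
R^{\otimes_k n}/(t_1(\Psi_1(s_1)),\dots,t_n(\Psi_n(s_n)))\;\cong\;(R/\Psi_1(s_1))\otimes_k\cdots\otimes_k(R/\Psi_n(s_n)),
\]
obtained by tensoring the quotient maps $R\to R/\Psi_i(s_i)$ over $k$, one factor at a time. By hypothesis each $R/\Psi_i(s_i)$ is faithfully flat over $k$, and a finite tensor product over $k$ of faithfully flat $k$-algebras is again faithfully flat over $k$ (by induction, using stability of faithful flatness under base change and composition), hence nonzero as $k\ne 0$. Thus the displayed quotient is nonzero, contradicting that the $t_i(\Psi_i(s_i))$ generate the unit ideal. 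Combined with the previous paragraph, this shows $\{\{S_i,t_i\circ\Psi_i\}\}_{i\in[n]}$ is an $n$-indivisible sequence.

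The argument is formal commutative algebra and I do not foresee a genuine obstacle. The only points that need care are the bookkeeping in the base-change computation of (i) and, when $n=\infty$, the observation that a unit-ideal relation descends to a finite tensor power because the colimit defining $R^{\otimes_k\infty}$ is filtered.
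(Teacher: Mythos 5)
Your proof is correct and follows the same overall strategy as the paper: part (i) by direct base-change bookkeeping (the paper simply declares this ``standard''), and part (ii) by combining (i) with the identification of $R^{\otimes_k n}/(t_1(\Psi_1(s_1)),\dots,t_n(\Psi_n(s_n)))$ with $\bigotimes_{i,k} R/\Psi_i(s_i)$ and the non-vanishing of the latter via faithful flatness over $k$. The one point of divergence is how that identification is justified: the paper invokes the K\"unneth spectral sequence applied to the two-term complexes $R \xrightarrow{\Psi_i(s_i)} R$, whereas you obtain it by tensoring the quotient maps one factor at a time, i.e.\ by right-exactness of the tensor product. Your route is the more elementary one and is fully adequate here, since only the degree-zero statement is needed and it requires no flatness hypotheses. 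You also treat the $n=\infty$ case explicitly by descending a unit-ideal relation to a finite tensor power through the filtered colimit, a detail the paper leaves implicit; this is a worthwhile addition rather than a deviation.
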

\begin{proof}
(i) is standard. For (ii), by (i) the sequences $\{S_i, t_i \circ \Psi_i:S_i \to R^{\otimes_k n}\}$ are $1$-indivisible for each $i \in [n]$. Using the Kunneth spectral sequence applied to the complexes $R \over{\Psi(s_i)}{\to} R$, we can show that:
\[R^{\otimes_{k} n}/(t_1 \circ \Psi_1({s_1}),...,t_n \circ \Psi_n(s_n)) = \bigotimes_{i=1, k}^n R/\Psi_i(s_i).\]
for any elements $s_i \in S_i$. Under the faithfully flat assumption of the rings on the right, the latter ring is non-zero, hence (ii).
\end{proof}
\begin{example}\label{example:indiv}
Here we will list a few examples of $n$-indivisible sequences.
\begin{enumerate}
\item[(i)] Let $k$ be a commutative ring. Then $\{k, \Psi:k \to k[x]\}$, where $\Psi(a) = x-a$ for each $a \in k$, is a $1$-indivisible sequence. Using Proposition~\ref{proposition:indivequiv} we find that $\{\{k, \Psi_i: k \to k[x_1,...,x_n]\}\}_{i\in [n]}$, where $\Psi_i(a) = x_i - a$ for every $a \in k$, forms an $n$-indivisible sequence, for any $n \in \bb{N} \cup \{\infty\}$. 
\item[(ii)] Let $S$ be a set and $R$ be a ring with a subset $\{e_s\}_{s \in S}$ of orthogonal idempotents. Then the sequence $\{S, \Psi:S \to R\}$ defined by $\Psi(s) = 1-e_s$ is $1$-indivisible. Using Proposition~\ref{proposition:indivequiv}, we find that $\{\{S, t_i \circ \Psi:S \to R^{\otimes_{\bb{F}_p} n}\}\}_{i \in [n]}$, forms an $n$-indivisible sequence for any $n \in \bb{N}\cup \{\infty\}$. We remark that $R^{\otimes_{\bb{F}_p} n}$ is a $p$-boolean ring (see \ref{definition:pboolean}).
\end{enumerate}
Let us now briefly discuss why each of the examples are $1$-indivisible. For (i), we note that
\[\coker{\Psi_{k[x],k}} = \sum_{a \in k} \frac{1}{x-a}k[x] \subset \text{Frac}(k[x]).\]
It's standard to show that $\coker{\Psi_{k[x],k}}$ is therefore flat. Since $k[x]$ is a domain, $\Psi_{k[x],k}$ is clearly injective, so $\Psi_{k[x],k}$ is universally injective.\\
\indent For (ii), by (i) Proposition~\ref{proposition:indivequiv}, it suffices to show that $\Psi_{R,S}$ is injective. In particular, suppose we have an equation:
\[\Psi_{R,S}(\sum_{i \in [n]} r_i 1_{s_i}) = (\sum_{i \in [n]} r_i) \oplus \bigoplus_{i \in [n]} \Psi(s_i)r_i = 0\]
for a subset of distinct elements $\{s_1,...,s_n\} \subset S$ and $r_i \in R$. Therefore, $r_i \in e_{s_i}R, \; \; \forall i \in [n]$, and 
\[\sum_{i \in [n]} r_i = 0,\]
which, after multiplying by $e_{s_i}$, gives that $r_ie_{s_i}=0,$ and therefore $r_i \in (1-e_{s_i})R \Rightarrow r_i=0 \;\; \forall i \in [n]$, as desired. 
\end{example}
We have a key property of $n$-indivisible sequences.
\begin{theorem}\label{theorem:indivisiblecup}
Let $n \in \bb{N}$, $R$ be a ring and $\{\{S_i, \Psi_i: S_i \to R\}\}_{i \in [n]}$ an $n$-indivisible sequence in $R$. Then the class:
\[\bigotimes_{i \in [n],R} (R(\iota_{S_i}) \circ \cl{\Psi_{R,S_i}}) \in \text{Ext}^{n}_{R}(\otimes_{i \in [n],R} \coker{\Psi_{R,S_i}}, R\{\underset{i \in [n]}{\amalg} S_i\})\]
is non-zero as soon as $|S_i| \ge \aleph_{i-1}, \; \forall i \in [n]$.\footnote{Note that:
\[\bigotimes_{i\in [n],R}^n (R(\iota_{S_i}) \circ \cl{\Psi_{R,S_i}}) \neq 0 \Longleftrightarrow \bigotimes_{i\in [n],R} (\cl{\Psi_{R,S_i}}) \neq 0\]
since for each $i \in [n]$, $R({\iota}_{S_i}): \oplus_{S_i} R \to R\{S_i\}$ admits an $R$-linear left-inverse.}
\end{theorem}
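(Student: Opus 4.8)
The plan is to identify the $\text{Ext}$-group in question with an $n$-th derived inverse limit and then exhibit the relevant class inside it. By the footnote it is enough to prove $\bigotimes_{i \in [n],R}\cl{\Psi_{R,S_i}} \neq 0$ in $\text{Ext}^n_R(\bigotimes_{i\in[n],R}\coker{\Psi_{R,S_i}}, N)$ with $N := \bigoplus_{\prod_i S_i}R$. Write $C_i := \coker{\Psi_{R,S_i}}$; for a finite set $F \in B(S_i)$ let $C_{i,F} := \coker{\oplus_F R \to R \oplus \bigoplus_F R}$ be the cokernel of the restriction of $\Psi_{R,S_i}$, so that $C_i = \colim{F \in B(S_i)}C_{i,F}$. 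Since each $C_i$ is flat, $\bigotimes_{i}C_i = \colim{\vec F \in \prod_i B(S_i)}\bigotimes_i C_{i,F_i}$, where $\vec F = (F_1,\dots,F_n)$, and hence $\text{RHom}_R(\bigotimes_i C_i, N) = \mathrm{R}\lim_{\vec F}\text{RHom}_R(\bigotimes_i C_{i,F_i}, N)$.

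The key structural input is that $C_{i,F}$ is projective whenever $F$ is finite. Indeed, one checks as in Example~\ref{example:indiv} that $\cl{\Psi_{R,F}} = 0$ — equivalently $\Psi_{R,F}$ splits, equivalently $C_{i,F}$ is a direct summand of $R \oplus \bigoplus_F R$ — as soon as the elements $\{\Psi_i(s)\}_{s \in F}$ of $R$ are pairwise comaximal; and this comaximality is forced by $1$-indivisibility, since if two of them lay in a common maximal ideal $\ideal m$, then the reduction of $\Psi_{R,S_i}$ modulo $\ideal m$ — still injective, by Proposition~\ref{proposition:indivequiv}(i) and universal injectivity — would annihilate the difference of the two corresponding basis vectors. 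Therefore $\text{RHom}_R(\bigotimes_i C_{i,F_i}, N)$ sits in degree $0$ for all finite $\vec F$, the spectral sequence $E_2^{p,q} = \lim^{(p)}_{\vec F}\text{Ext}^q_R(\bigotimes_i C_{i,F_i}, N) \Rightarrow \text{Ext}^{p+q}_R(\bigotimes_i C_i, N)$ degenerates, and we obtain a canonical isomorphism $\text{Ext}^n_R(\bigotimes_i C_i, N) \cong \lim^{(n)}_{\prod_i B(S_i)}\text{Hom}_R(\bigotimes_i C_{i,F_i}, N)$ carrying $\bigotimes_i \cl{\Psi_{R,S_i}}$ to an explicit $n$-cocycle $\xi$ built from the splittings of the finite-level sequences.

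It then remains to show that $\xi$ is not a coboundary, which is where the hypotheses are used and where the real work lies. The directed poset $\prod_i B(S_i)$ has cofinality $\max_i|S_i| \ge \aleph_{n-1}$, so the Goblot--Mitchell vanishing theorem does not force $\lim^{(n)}$ to be zero — over a poset of cofinality $< \aleph_{n-1}$ it would be. To show $\xi$ itself survives I would argue by contradiction, in the Roos cochain complex of the system $\{\text{Hom}_R(\bigotimes_i C_{i,F_i}, N)\}_{\vec F}$: trivialising $\xi$ amounts to choosing the finite-level splittings compatibly up to corrections of lower filtration, and unwinding such a choice produces, for a suitable point $(s_1,\dots,s_n) \in \prod_i S_i$, an identity $\sum_{i \in [n]}\Psi_i(s_i)r_i = 1$ in $R$ with $r_i \in R$, contradicting the non-unit condition defining an $n$-indivisible sequence. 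The bound $|S_i| \ge \aleph_{i-1}$ gets consumed one coordinate at a time in this unwinding — the corrections depending on the coordinates $S_1,\dots,S_{i-1}$ are indexed by a proper subset of $S_i$ — and making this inductive bookkeeping precise, rather than the homological set-up, is the main obstacle. For $n=1$ everything degenerates: a splitting of $\Psi_{R,S_1}$ forces $\Psi_1(s)$ to be a unit for all $s$ outside a finite set, impossible once $|S_1| \ge \aleph_0$.
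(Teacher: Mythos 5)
Your homological setup is sound and contains two genuinely nice observations that the paper does not make explicit: first, that $1$-indivisibility forces the elements $\Psi_i(s)$, $s \in S_i$, to be pairwise comaximal (your reduction modulo a common maximal ideal is correct, since universal injectivity of $\Psi_{R,S_i}$ would fail on $1_s - 1_{s'}$), hence that the finite-stage cokernels $C_{i,F}$ are finitely generated projective; and second, that $\text{Ext}^n_R(\otimes_i C_i, N)$ is therefore computed as a derived inverse limit $\lim^{(n)}$ over the directed poset $\prod_i B(S_i)$, which correctly locates the cardinality hypotheses in the Goblot--Mitchell circle of ideas. This is a genuinely different reduction from the paper's, which instead uses Lemma~\ref{lemma:extcrit} to convert non-vanishing of the tensor class into the non-existence of an $R$-linear retraction $r$ of the first map of the tensor-product resolution over $R\{\amalg_{i\in[n]}S_i\}$.

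However, the argument is not complete: the entire quantitative content of the theorem is concentrated in the step you defer. Knowing that $\lim^{(n)}$ need not vanish when the cofinality is at least $\aleph_{n-1}$ says nothing about whether your particular cocycle $\xi$ is a coboundary, and the individual bounds $|S_i|\ge\aleph_{i-1}$ for each $i$ are not captured by the cofinality of the product poset, which only sees $\max_i|S_i|$. What is missing is exactly the content of the paper's Proposition~\ref{proposition:setbounds}: given, for each $i$, a function $v_i$ from $\prod_{j\ne i}S_j$ to finitely supported functions on $\prod_j S_j$, one must produce a single point $s$ with $v_i(p^n_i(s))(s)=0$ for all $i$ simultaneously. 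The paper does this by a downward induction on coordinates using the union bound $\aleph_0\cdot\aleph_{n-2}<\aleph_{n-1}$, and then reads off the contradiction $\sum_i \Psi_i(\mathrm{pr}_i(s))r_i=1$ from the multidegree-$(1,\dots,1)$ component of the retraction identity. Your Roos-complex ``unwinding'' would have to reprove precisely this combinatorial lemma; since you only assert its conclusion, what you have is a strategy rather than a proof. (Your $n=1$ case is correct and complete, but it is also the only case where the diagonal argument is trivial.)
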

In general, showing $\text{Ext}^n$ classes are non-zero is difficult. Recall the following lemma \cite[Lemma 2.15]{zelich}
\begin{lemma}\label{lemma:extcrit}
Let $A$ be a ring, $P_1, P_2, P_1', P_2', ..., P_r'$ be projective $A$-modules, together with two flat $A$-modules $M$ and $M'$ that form two exact sequences:
\[0 \to P_1 \stackrel{d_1}{\to} P_2 \to M \to 0,\]
\[0 \to P_1' \stackrel{d_2}{\to} ... \to P_r' \to M' \to 0.\]
We may view these as extensions $\eta_1 \in \text{Ext}^1_A(M, P_1)$ and $\eta_2 \in \text{Ext}^{r-1}_A(M',P_1')$, and as projective resolutions $P^{\bullet}_1, P^{\bullet}_2$. Then then the projective resolution:
\[P^{\bullet}_1 \otimes_A P^{\bullet}_2 \to M \otimes_A M',\]
gives rise to an extension $\text{Ext}^{r}_A(M \otimes_A M', P_1 \otimes_A P_1')$ which is equal to $\eta_1 \otimes_A \eta_2$. Moreover, showing this extension is non-zero is equivalent to verifying that 
\[P_1 \otimes P_1' \stackrel{d_1 \otimes 1_{P'_1} \oplus 1_{P_1} \otimes d_2}{\to} P_2 \otimes_A P_1' \oplus P_1 \otimes_A P_2\]
doesn't admit an $A$-linear left inverse.
\end{lemma}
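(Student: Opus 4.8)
The plan is to compute both sides in the derived category $D(A)$, where the monoidal pairing of $\text{Ext}$-classes defined in the Notation is exactly the derived tensor product of morphisms, and to identify $\eta_1 \otimes_A \eta_2$ with the tautological class read off from the top of the total-complex resolution $P^{\bullet}_1 \otimes_A P^{\bullet}_2$. Write $P^{\bullet}_1$ for the two-term complex $P_1 \stackrel{d_1}{\to} P_2$ with $P_2$ in degree $0$, and $P^{\bullet}_2$ for $P_1' \to \cdots \to P_r'$ with $P_r'$ in degree $0$; flatness is not yet needed, and these are quasi-isomorphic to $M$ and $M'$. Under the identifications $\text{Ext}^1_A(M,P_1)=\text{Hom}_{D(A)}(M,P_1[1])$ and $\text{Ext}^{r-1}_A(M',P_1')=\text{Hom}_{D(A)}(M',P_1'[r-1])$, the classes $\eta_1$ and $\eta_2$ are represented by the chain maps that are the identity on the top terms $P_1$, resp.\ $P_1'$, and zero in all other degrees: this is just the statement that the connecting morphism of the triangle $P_1 \to P_2 \to M \to P_1[1]$, and its iterated analogue for the longer resolution, is the projection onto the top of the resolution.

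First I would verify that $P^{\bullet}_1 \otimes_A P^{\bullet}_2$ is a projective resolution of $M \otimes_A M'$. Each term is a finite direct sum of tensor products of projectives, hence projective, and since $M$ and $M'$ are flat the homology of the total complex, which computes $\text{Tor}^A_{\ast}(M,M')$, is concentrated in degree $0$ where it equals $M \otimes_A M'$. In top homological degree $r$ the only surviving summand is $P_1 \otimes_A P_1'$, coming from the unique factorization $r = 1 + (r-1)$ allowed by the degree bounds, so we obtain a length-$r$ resolution whose top term is $P_1 \otimes_A P_1'$ and whose top differential is
\[ d_r = d_1 \otimes 1 \;\oplus\; (1 \otimes d_2): P_1 \otimes_A P_1' \to (P_2 \otimes_A P_1') \oplus (P_1 \otimes_A P_2'), \]
up to a sign on the second component coming from the Koszul rule for total complexes.

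Next I would compute $\eta_1 \otimes_A \eta_2$ on this resolution. Because the derived tensor product of two morphisms of $D(A)$ is represented by the total complex of the tensor product of any chain-level representatives, $\eta_1 \otimes_A \eta_2$ is represented by the tensor of the two projection chain maps, which is $\text{id}_{P_1} \otimes \text{id}_{P_1'} = \text{id}_{P_1 \otimes_A P_1'}$ sitting in top degree $r$. This is precisely the tautological class of the extension $0 \to P_1 \otimes_A P_1' \stackrel{d_r}{\to} (P^{\bullet}_1 \otimes_A P^{\bullet}_2)_{r-1} \to \cdots \to M \otimes_A M' \to 0$, identifying the total-complex construction with $\eta_1 \otimes_A \eta_2$. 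Finally, computing $\text{Ext}^r_A(M \otimes_A M', P_1 \otimes_A P_1')$ through this length-$r$ resolution presents it as $\text{coker}\big(d_r^{\ast}\big)$, where $d_r^{\ast}: \text{Hom}_A\big((P^{\bullet}_1 \otimes_A P^{\bullet}_2)_{r-1}, P_1 \otimes_A P_1'\big) \to \text{Hom}_A\big(P_1 \otimes_A P_1', P_1 \otimes_A P_1'\big)$; the class $[\text{id}_{P_1 \otimes_A P_1'}]$ then vanishes exactly when $\text{id}$ lies in the image of $d_r^{\ast}$, i.e.\ when some $h$ satisfies $h \circ d_r = \text{id}$. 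Since signs are absorbed into $h$, this is the desired criterion: $\eta_1 \otimes_A \eta_2 \neq 0$ iff $d_1 \otimes 1 \oplus 1 \otimes d_2$ admits no $A$-linear left inverse.

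The step I expect to be the main obstacle is the compatibility used in the third paragraph: that the abstract monoidal pairing on $\text{Ext}$ really is computed by the identity map on the top term of the tensor-product resolution. I would dispatch this entirely at the chain level --- representing each $\eta_i$ as a projection onto its top term and invoking that the derived tensor of morphisms is the total complex of the tensor of representatives --- which avoids tracing the pairing through any K\"unneth or Yoneda bookkeeping. The remaining ingredients (projectivity of tensor products of projectives, the vanishing of higher $\text{Tor}$ from flatness, and the cokernel description of top $\text{Ext}$) are routine. I would also record that the codomain in the statement should read $(P_2 \otimes_A P_1') \oplus (P_1 \otimes_A P_2')$ and that the sign ambiguity in $d_r$ is irrelevant to the existence of a left inverse.
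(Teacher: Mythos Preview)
The paper does not actually prove this lemma; it is recalled verbatim from \cite[Lemma~2.15]{zelich} and stated without argument, so there is no in-paper proof to compare against. Your proposal is correct and is the expected derived-category proof: represent each $\eta_i$ by the projection of its resolution onto its top term, tensor the chain maps to get $\mathrm{id}_{P_1\otimes P_1'}$ in top degree of the total complex, and read off the vanishing criterion from the cokernel presentation of $\mathrm{Ext}^r$. Your observation that the target of the displayed map should be $P_2\otimes_A P_1' \oplus P_1\otimes_A P_2'$ (not $P_1\otimes_A P_2$) is also correct and worth noting.
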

\begin{proof}[Proof of Theorem~\ref{theorem:indivisiblecup}]
Using Lemma~\ref{lemma:extcrit}, we may reduce to showing there does not exist an $R$-linear extension $r$ fitting into the commutative diagram:
\[\begin{tikzcd}R\{\underset{i \in [n]}{\amalg} S_i\} &[5em]\\[5ex] \underset{{\underset{i \in [n]}{\prod} S_i}}{\bigoplus} R \arrow[r, "\bigoplus_i \Psi_{R,S_i}^n"] \arrow[u,"\underset{i \in [n],R}{\bigotimes} R(\iota_{S_i})"]& \bigoplus_{i} \underset{\underset{j \in [n] \setminus \{i\}}{\prod} S_j}{\bigoplus} R \oplus \underset{\underset{j \in [n]}{\prod}S_j}{\bigoplus}  R \arrow[ul, dashed, swap, "r"]\end{tikzcd}\]
Here, $\Psi^n_{R, S_i}$ is defined as the unique $R$-linear map satisfying:
\[\Psi^n_{R, S_i}(1_s) = 1_{p^n_i(s)} \oplus \Psi_i(s_i) 1_{s}, \; \; \forall s \in \prod_{j \in [n]} S_j.\]
Suppose such an extension $r$ existed. For every $ i \in [n]$, $s' \in \prod_{j \in [n]\setminus \{i\}} S_j$ and $s \in \prod_{j \in [n]}S_j$, let
\[r(1_{i,s'}) = f_i(s'), r(1_{i,s}) = g_i(s).\]
Here, $f_i(s'), g_i(s)$ are elements $R\{\amalg_{i \in [n]} S_i\}$, which satisfy the following equation in $R\{\amalg_{i \in [n]} S_i\}$: 
\[\sum_i f_i(p^n_i(t)) + \sum_i \Psi_{i}(\text{pr}_i(t)) g_i(t) = \prod_{i \in [n]} R(\iota_{S_i})(1_{\text{pr}_i(t)}), \; \; (*)\]
for every $t \in \prod_{j \in [n]} S_j$.\\
\indent Let 
\[d_n: R\{\amalg_{i \in [n]} S_i\} \to \underset{\underset{i \in [n]}{\prod}S_i}{\bigoplus}  R\]
be the $R$-linear left-inverse to ${\otimes}_{i \in [n],R} R(\iota_{S_i})$. Then for any $t' \in  \prod_{j \in [n]\setminus \{i\}} S_j$, $d_n(f_i(t'))$ is a vector in $\bigoplus_{\underset{i \in [n]}{\prod}S_i}  R$, which we view as an element of $\text{Hom}^{\text{fin}}_{\text{Set}}({\prod}_{i \in [n]}S_i , R)$. By Proposition~\ref{proposition:setbounds}, there exists an element $s \in \prod_{i \in [n]} S_i$ such that $d_n(f_i(p^n_i(s)))(s)=0, \; \forall i \in [n]$. Therefore, applying $d_n$ to $(*)$, we see that:
\[\sum_i \Psi_i(\text{pr}_i(s)) d_n(g_i(s))(s) = d_n(\prod_{i \in [n]} R(\iota_{S_i})(1_{\text{pr}_i(s)}))(s) = 1.\]
But this implies that $\{\Psi_i(\text{pr}_i(s))\}_{i \in [n]}$ generate the unit ideal in $R$, which is a contradiction.
\end{proof}
The following set-theoretic result was used above.
\begin{proposition}\label{proposition:setbounds}
Let $R$ be a ring and $S$ a set. For each $ i\in \{1,...,n\}$, consider functions:
\[v_i: \prod_{j \in [n] \setminus \{i\}} S_j \to \text{Hom}^{\text{fin}}_{\text{Set}}(\prod_{i \in [n]} S_i,R).\]
If $|S_i| \ge \aleph_{i-1}$, then there exists an element $s \in \prod_{i \in [n]} S_i$ such that $v_i(p^n_i(s))(s) = 0$ for each $i \in \{1,2,...,n\}$.
\end{proposition}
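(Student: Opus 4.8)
The plan is to argue by induction on $n$, exploiting that $|S_n|$ strictly dominates the cardinality of $\prod_{j \in [n-1]} S_j$ in order to dispose of the constraint indexed by $i = n$ all at once, before the remaining coordinates are chosen.

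First I would reduce to the case $|S_i| = \aleph_{i-1}$ for every $i$. Choose subsets $S_i' \subseteq S_i$ with $|S_i'| = \aleph_{i-1}$, restrict the domain of each $v_i$ to $\prod_{j \in [n] \setminus \{i\}} S_j'$, and postcompose with the map $\text{Hom}^{\text{fin}}_{\text{Set}}(\prod_j S_j, R) \to \text{Hom}^{\text{fin}}_{\text{Set}}(\prod_j S_j', R)$ given by restriction of functions. Since restricting a function does not alter its values on $\prod_j S_j'$, any $s \in \prod_j S_j'$ solving this new instance solves the original one. Next, for $i \in [n]$ and $\bar s \in \prod_{j \in [n] \setminus \{i\}} S_j$, let $T_i(\bar s) \subseteq S_i$ be the set of $s_i \in S_i$ at which $v_i(\bar s)$ is nonzero on the tuple with $i$-th coordinate $s_i$ and all other coordinates given by $\bar s$. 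Because $v_i(\bar s)$ has finite support, $T_i(\bar s)$ is finite; and since the tuple with $i$-th coordinate $s_i$ and other coordinates $p^n_i(s)$ is exactly $s$, the statement to be proved is precisely: there is $s \in \prod_{i \in [n]} S_i$ with $s_i \notin T_i(p^n_i(s))$ for every $i$.

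For the induction: when $n = 1$ the domain of $v_1$ is a point, so there is a single finite bad set $T \subseteq S_1$, and any $s_1 \in S_1 \setminus T$ works since $|S_1| \ge \aleph_0$. Assume the claim for $n-1$. Since finitely many infinite cardinals multiply to their maximum,
\[\Big|\bigcup_{\bar s \in \prod_{j \in [n-1]} S_j} T_n(\bar s)\Big| \ \le\ \aleph_0 \cdot \prod_{j=1}^{n-1} \aleph_{j-1} \ =\ \aleph_{n-2} \ <\ \aleph_{n-1} \ =\ |S_n|,\]
so we may pick $s_n \in S_n$ lying in no $T_n(\bar s)$ for $\bar s \in \prod_{j \in [n-1]} S_j$; then $s_n \notin T_n(s_1, \dots, s_{n-1})$ no matter how $s_1, \dots, s_{n-1}$ are chosen later, i.e., the $i = n$ constraint is already satisfied. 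For $i \le n-1$, freezing the $n$-th coordinate at $s_n$ converts $v_i$ into a function on $\prod_{j \in [n-1] \setminus \{i\}} S_j$ valued in $\text{Hom}^{\text{fin}}_{\text{Set}}(\prod_{j \in [n-1]} S_j, R)$ (feed in the argument together with $s_n$ in slot $n$, then restrict the output to the slice where the $n$-th coordinate equals $s_n$); the constraint this produces on $(s_1, \dots, s_{n-1})$ is exactly $s_i \notin T_i(p^n_i(s_1, \dots, s_n))$. Applying the inductive hypothesis to $S_1, \dots, S_{n-1}$ --- whose sizes still satisfy $|S_i| \ge \aleph_{i-1}$ --- with these functions yields $(s_1, \dots, s_{n-1})$ meeting all $n-1$ of them, and then $s = (s_1, \dots, s_n)$ is as required.

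The only genuine idea here is this order of play; everything else --- finiteness of the $T_i$, the harmless passage to subsets, and the compatibility of ``freeze the last coordinate'' with $p^n_i$ --- is bookkeeping. The one inequality demanding care is $\aleph_{n-2} < \aleph_{n-1}$, which uses that the set permitted to have size $\aleph_{n-1}$ is exactly the one whose coordinate is peeled off first; the roles of the $S_i$ cannot be permuted.
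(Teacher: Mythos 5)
Your proof is correct and follows essentially the same strategy as the paper's: choose $s_n$ first by noting that the union of the finite bad sets over all $\bar s \in \prod_{j\in[n-1]} S_j$ has cardinality at most $\aleph_0\cdot\aleph_{n-2} < \aleph_{n-1} \le |S_n|$, then recurse on the remaining coordinates with the $n$-th coordinate frozen. Your explicit preliminary reduction to $|S_i| = \aleph_{i-1}$ is a useful bit of extra care (the paper's cardinality bound tacitly assumes it), but it does not change the argument.
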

\begin{proof}
Define $v'_i = v_i(p^n_i(s))(s): \prod_{i \in [n]} S_i \to R$. The functions $v'_i$ have the property that for any $s' \in \prod_{j \in  [n] \setminus \{i\}} S_i$, 
\[v'_i \circ t_i^{s'}=v_i(s') \circ t_i^{s'} \in \text{Hom}^{\text{fin}}_{\text{Set}}(S,R).\]
For each $s' \in \prod_{i \in [n-1]} S_i$, define a subset $V_{s'} \subset S_n$ as follows:
\[V_{s'} = \{s \in S_n | \; v'_n(t_n^{s'}(s)) \neq 0 \}.\]
By supposition, $V_{s'}$ is a finite subset of $S_n$. Notice that:
\[|\cup_{s' \in \prod_{i \in [n-1]} S_i} V_{s'}| \le \aleph_0 \aleph_{n-2} < \aleph_{n-1}.\]
Therefore, there is an $s_{n} \in S_n$ such that:
\[v'_n(t_n^{s'}(s_n)) = 0, \; \; \forall s' \in \prod_{i \in [n-1]} S_i.\]
We may repeat this argument for the restricted functions $v'_i \restriction \prod_{j \in [n] \setminus \{i\}} S_j \times s_n$, and we eventually end up with elements $s_i \in S_i$ defining an element $s \in \prod_{i \in [n]}S_i$ such that $v'_i(s) = v_i(p^n_i(s))(s)=0, \; \; \forall i \in [n]$, as desired.\\
\end{proof}
The following translates Theorem~\ref{theorem:indivisiblecup} to an algebraic result.
\begin{theorem}\label{theorem:indivalg}
Let $n \in \bb{N} \cup \{\infty\}$, and $\{\{S_i, \Psi_i:S_i \to R\}\}$ be a $n$-indivisible sequence in $R$. For all $i \in [n]$, suppose we have $R$-algebra maps $R\{S_i\} \over{g_i}{\leftarrow} A_i \over{f_i}{\to} B_i$ fitting into a commutative diagram:
\[\begin{tikzcd}
R\{S_i\}&\\
A_i \arrow[u, "g_i", swap] \arrow[r, "f_i"] & B_i \\
\oplus_{S_i} R \arrow[u] \arrow[uu, bend left, "R(\iota_{S_i})"] \arrow[r, "\Psi_{R,S_i}"] & R \oplus \bigoplus_{S_i} R \arrow[u]
\end{tikzcd}\]
and each $f_i$ are faithfully flat. If for all $i \in [n]$, $|S_i| \ge \aleph_{i-1}$, then the faithfully flat ring map:
\[\begin{tikzcd}\bigotimes_{i \in [n], R} A_i \arrow[r, "\otimes_{i \in [n],R} f_i"]& \bigotimes_{i \in [n],R} B_i\end{tikzcd}\]
has descendability exponent $n$.
\end{theorem}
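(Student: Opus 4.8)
The plan is to analyse the extension class $\cl{f}\in\text{Ext}^1_A(\coker{f},A)$ of the map $f:=\bigotimes_{i\in[n],R}f_i\colon A\to B$, where $A:=\bigotimes_{i\in[n],R}A_i$ and $B:=\bigotimes_{i\in[n],R}B_i$. First I would observe that a tensor product of faithfully flat maps is faithfully flat, so $f$ is faithfully flat, hence pure as a map of $A$-modules, hence injective with flat cokernel $\bar B:=\coker{f}$; by the same token each $\bar B_i:=\coker{f_i}$ is $A_i$-flat, while $\coker{\Psi_{R,S_i}}$ is $R$-flat by $1$-indivisibility. Since everything in sight is flat, all tensor products below are underived, and the assertion amounts to showing $\cl{f}^{\otimes_A n}\neq 0$ together with, when $n<\infty$, $\cl{f}^{\otimes_A n+1}=0$ (for $n=\infty$ one shows instead $\cl{f}^{\otimes_A m}\neq 0$ for every $m$).

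\emph{Non-vanishing.} The idea is to identify an explicit image of $\cl{f}^{\otimes_A n}$ under natural operations with the non-zero class of Theorem~\ref{theorem:indivisiblecup}. For each $i$, the given commutative square upgrades to a morphism of short exact sequences of $R$-modules from $0\to\oplus_{S_i}R\xrightarrow{\Psi_{R,S_i}}R\oplus\bigoplus_{S_i}R\to\coker{\Psi_{R,S_i}}\to 0$ to $0\to A_i\xrightarrow{f_i}B_i\to\bar B_i\to 0$, whose leftmost vertical map $\alpha_i$ satisfies $g_i\circ\alpha_i=R(\iota_{S_i})$; passing to classes and restricting scalars to $R$, this yields $R(\iota_{S_i})\circ\cl{\Psi_{R,S_i}}=\gamma_i^{*}\big((g_i)_{*}\cl{f_i}\big)$, where $\gamma_i\colon\coker{\Psi_{R,S_i}}\to\bar B_i$ is the induced map. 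Independently, base-changing $\cl{f_i}$ along $A_i\to A$ and using that a base change of a cofibre sequence is a cofibre sequence gives $\cl{f_i}\otimes_{A_i}A=\cl{f}\circ(\bar B_i\otimes_{A_i}A\to\bar B)$ in $D(A)$, with $\bar B_i\otimes_{A_i}A\to\bar B$ induced by the $i$-th coordinate inclusion $B_i\to B$. Combining these via the multiplicativity of the external $\otimes_A$-pairing on $\text{Ext}$-groups (together with its compatibility with restriction of scalars, pushforward and pullback), I would conclude that $\cl{f}^{\otimes_A n}$, restricted to an $R$-module class, pushed forward along $\bigotimes_i g_i\colon A\to R\{\amalg_{i\in[n]}S_i\}$, and pulled back along the $R$-linear map $\bigotimes_{i\in[n],R}\coker{\Psi_{R,S_i}}\to\bar B^{\otimes_A n}$ built from the $\gamma_i$, the coordinate maps $\bar B_i\to\bar B$ and the canonical surjection $\bar B^{\otimes_R n}\to\bar B^{\otimes_A n}$, is exactly $\bigotimes_{i\in[n],R}\big(R(\iota_{S_i})\circ\cl{\Psi_{R,S_i}}\big)$. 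This is non-zero by Theorem~\ref{theorem:indivisiblecup} — the step that consumes the hypotheses $|S_i|\ge\aleph_{i-1}$ — so $\cl{f}^{\otimes_A n}\neq 0$. For $n=\infty$ the very same argument applied to the first $m$ terms $\{\{S_i,\Psi_i\}\}_{i\in[m]}$ (an $m$-indivisible sequence still satisfying $|S_i|\ge\aleph_{i-1}$) gives $\cl{f}^{\otimes_A m}\neq 0$ for all $m$, i.e. $f$ is not descendable.

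\emph{Upper bound for finite $n$.} Here I would use the tensor structure: $f$ factors as a composite $A=G_0\to G_1\to\cdots\to G_n=B$, with $G_k:=B_1\otimes_R\cdots\otimes_R B_k\otimes_R A_{k+1}\otimes_R\cdots\otimes_R A_n$, so that $G_{k-1}\to G_k$ is a base change of $f_k$ along $A_k\to G_{k-1}$; in particular each $G_{k-1}\to G_k$ is faithfully flat with flat cokernel, and correspondingly $\bar B$ acquires a filtration of length $n$ by $A$-submodules with flat subquotients. By the subadditivity of the descendability exponent along composites, $\text{exp}(f)\le\sum_{k=1}^{n}\big(\text{exp}(G_{k-1}\to G_k)-1\big)+1$, and since base change does not increase the exponent it suffices to bound $\text{exp}(f_i)\le 2$, i.e. $\cl{f_i}^{\otimes_{A_i}2}=0$. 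This is the $n=1$ case, which I would verify directly from the $i$-th commutative square (genuinely using the diagram, not merely flatness of $\bar B_i$). Together with the non-vanishing, $\text{exp}(f)=n$.

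I expect the crux to be the last identification in the non-vanishing step: matching the transported class $\cl{f}^{\otimes_A n}$ with $\bigotimes_{i\in[n],R}\big(R(\iota_{S_i})\circ\cl{\Psi_{R,S_i}}\big)$ requires threading the external cup product carefully through restriction of scalars, the pushforwards along the $g_i$, and the passage from $\coker{\Psi_{R,S_i}}$ through $\bar B_i$ to the top graded piece of the weight filtration on $\bar B$, checking commutativity of each intervening naturality square; this is the essential content, being a relativisation of Theorem~\ref{theorem:indivisiblecup}. A secondary delicate point is the base case $\cl{f_i}^{\otimes 2}=0$ of the upper bound.
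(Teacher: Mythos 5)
Your non-vanishing argument is, in substance, the paper's: the paper packages the functoriality bookkeeping you describe into Proposition~\ref{proposition:cup} (part (i) compares the internal $n$-fold power of $\cl{\otimes_i f_i}$ with the external product over $R$ of its restrictions of scalars via the multiplication map $\mu^{n-1}$, and part (ii) performs exactly the pullback along $\coker{\Psi_{R,S_i}}\to\coker{f_i}\to\coker{\otimes_i f_i}$ and the pushforward along the $g_i$ and the retraction $R\{S_i\}\to R$ that you sketch), and then everything reduces to Theorem~\ref{theorem:indivisiblecup}. The paper first base-changes along $\otimes_i g_i$ to replace $f_i$ by $h_i:R\{S_i\}\to T_i$, which is only cosmetically different from your pushforward along $\otimes_i g_i$. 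So for the assertion $\cl{f}^{\otimes_A n}\neq 0$ (hence non-descendability when $n=\infty$) your plan is correct and follows the paper's route.

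The gap is in your upper bound. The claim $\cl{f_i}^{\otimes_{A_i}2}=0$ does not follow from the hypotheses: the commutative diagram only constrains $f_i$ from below (it exhibits $R(\iota_{S_i})\circ\cl{\Psi_{R,S_i}}$ as a transport of $\cl{f_i}$), and $B_i$ may be enlarged at will. Concretely, if $D$ is any faithfully flat $A_i$-algebra with $\cl{A_i\to D}^{\otimes_{A_i}2}\neq 0$, replacing $B_i$ by $B_i\otimes_{A_i}D$ keeps the diagram commutative and $f_i$ faithfully flat, yet $\cl{f_i}^{\otimes_{A_i}2}\neq 0$, since the class of $A_i\to D$ is a pullback of the new $\cl{f_i}$ along an injection with flat cokernel. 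So no argument ``from the diagram'' can establish your base case. (Your subadditivity formula is also off: if each factor had square-zero class, $\sum_k(\text{exp}-1)+1$ would give exponent $\le 1$, contradicting your own lower bound; the usable bound is $\text{exp}(g\circ f)\le\text{exp}(f)+\text{exp}(g)$.) You are in good company here: the paper's own proof establishes only $\cl{f}^{\otimes_A n}\neq 0$ --- the base change along $\otimes_i g_i$ goes the wrong way for an upper bound, and Proposition~\ref{proposition:cup} is purely a non-vanishing statement --- so the theorem is really proved in the form ``descendability exponent $\ge n$,'' with the matching upper bound in Theorem~\ref{theorem:mainindiv} and Corollary~\ref{corollary:mainindiv} needing to come from the specific rings there (e.g.\ $\aleph_{n-1}$-Noetherianness bounding the projective dimension of flat modules) rather than from the hypotheses of this theorem.
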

\begin{proof}
Let $h_i: R\{S_i\} \to T_i$ be the base-change of $f_i$ along $g_i$. By base-change along $\otimes_{i \in [n], R} g_i : \otimes_{i \in [n], R} A_i \to \otimes_{i \in [n], R} R\{S_i\}$, it suffices to that $\otimes_{i \in [n], R} h_i$ has exponent of descendability $n$. But this follows directly by combining Proposition~\ref{proposition:cup} (ii) and Theorem~\ref{theorem:indivisiblecup}.
\end{proof}
The following proposition was used above.
\begin{proposition}\label{proposition:cup}
Let $A \to B$ be a flat map rings.
\begin{enumerate}
\item[(i)] For any natural number $n$, let $M_i$ be $B$-modules in $D(B)$ for $i \in [n]$, and let $\mu_{B/A}^{k}: B^{\otimes_A k+1} \to B$ be the multiplication map realising $B$ as an $A$-algebra. Consider elements $\eta_i \in \text{Ext}^1_B(M_i, B)$, then:
\[\mu^{n-1}_{B/A} \circ \otimesl_{i,A} R_{B|A}(\eta_i) \neq 0 \implies \otimesl_{i, B} \eta_i \neq 0.\]
\item[(ii)] For every $s \in \bb{N}$, suppose we have flat $A$-algebras $B_s$, faithfully flat maps $f_s: B_s \to C_s$ of $A$-algebras and injective maps $\Psi_s: M_s \to N_s$ of flat $A$-modules fitting into a commutative diagram:
\[\begin{tikzcd}
B_s \arrow[r, "f_s"] & C_s \\
M_s \arrow[u, "h_s"] \arrow[r, "\Psi_s"] & N_s \arrow[u]
\end{tikzcd}\]
Assume the unit $A \to B_s$ admits an $A$-linear left-inverse $k_s: B_s \to A$. Then for any subset $S \subset \bb{N}$, and any finite subset $S' \subset S$:
\[\bigotimes_{s' \in S',A} (h_{s'} \circ \cl{\Psi_{s'} }) \neq 0 \implies \underset{s' \in S', \otimes_{s \in S,A}B_s}{\bigotimes} \cl{ \otimes_{s \in S,A} f_s} \neq 0.\]
\end{enumerate}
\end{proposition}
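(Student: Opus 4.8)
For part (i), the plan is to deduce it formally from the fact that the restriction-of-scalars functor $R_{B|A} = g_* \colon D(B) \to D(A)$ associated to $g \colon A \to B$ is lax symmetric monoidal, being right adjoint to the strong monoidal functor $g^* = - \otimesl_A B$. Its lax structure maps $g_* X \otimesl_A g_* Y \to g_*(X \otimesl_B Y)$, evaluated on the unit object $B$ of $D(B)$, recover the multiplication $B \otimesl_A B \to B$, and their iterates give the map $\mu^{n-1}_{B/A} \colon B^{\otimesl_A n} \to B$. Regarding each $\eta_i$ as a morphism $M_i \to B[1]$ in $D(B)$ and invoking naturality of the (iterated) lax structure, one obtains a commutative square in $D(A)$ that expresses $\mu^{n-1}_{B/A} \circ \otimesl_{i,A} R_{B|A}(\eta_i)$ as the composite of the iterated lax map $\otimesl_{i,A} g_* M_i \to g_*(\otimesl_{i,B} M_i)$ followed by $g_*(\otimesl_{i,B} \eta_i)$. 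Hence if $\otimesl_{i,B} \eta_i = 0$, the additive functor $g_*$ kills the second factor, so $\mu^{n-1}_{B/A} \circ \otimesl_{i,A} R_{B|A}(\eta_i) = 0$; the contrapositive is the assertion.

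For part (ii), write $\widetilde{B} = \otimes_{s \in S, A} B_s$ (which is flat over $A$), $\widetilde{C} = \otimes_{s \in S, A} C_s$, and $f = \otimes_{s \in S, A} f_s \colon \widetilde{B} \to \widetilde{C}$, which is injective because each $B_s$ is $A$-flat. The plan is to apply part (i) with $\widetilde{B}$ in the role of $B$, with $n = |S'|$, and with every $\eta_i$ taken to be $\cl{f}$; then $\otimesl_{i,\widetilde{B}} \eta_i$ is exactly the class in the conclusion, so it suffices to establish
\[ \mu^{|S'|-1}_{\widetilde{B}/A} \circ \bigotimes_{s' \in S', A} R_{\widetilde{B}|A}(\cl{f}) \neq 0 \]
under the stated hypothesis.

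To do this, first read the given commutative square as a map of short exact sequences of $A$-modules from $0 \to M_{s'} \to N_{s'} \to \coker{\Psi_{s'}} \to 0$ to $0 \to B_{s'} \to C_{s'} \to \coker{f_{s'}} \to 0$; functoriality of Yoneda $\text{Ext}$ gives $h_{s'} \circ \cl{\Psi_{s'}} = \pi_{s'}^{*} R_{B_{s'}|A}(\cl{f_{s'}})$ for the induced map $\pi_{s'} \colon \coker{\Psi_{s'}} \to \coker{f_{s'}}$. Inserting $B_{s'}$, $C_{s'}$ as the $s'$-th tensor factors of $\widetilde{B}$, $\widetilde{C}$ likewise produces a map of short exact sequences from $0 \to B_{s'} \to C_{s'} \to \coker{f_{s'}} \to 0$ to $0 \to \widetilde{B} \to \widetilde{C} \to \coker{f} \to 0$, giving $(\iota_{s'})_{*} R_{B_{s'}|A}(\cl{f_{s'}}) = \rho_{s'}^{*} R_{\widetilde{B}|A}(\cl{f})$ for the induced $\rho_{s'} \colon \coker{f_{s'}} \to \coker{f}$. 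Combining these (pushforward on coefficients commutes with pullback on arguments), then forming the external product over $A$ for $s' \in S'$, post-composing with $\mu^{|S'|-1}_{\widetilde{B}/A}$, and using compatibility of the external-product pairing with $(-)_{*}$ and $(-)^{*}$, one arrives at
\[ J_{*}\!\left( \bigotimes_{s' \in S', A} (h_{s'} \circ \cl{\Psi_{s'}}) \right) = \left( \bigotimes_{s' \in S'} \sigma_{s'} \right)^{*}\!\left( \mu^{|S'|-1}_{\widetilde{B}/A} \circ \bigotimes_{s' \in S', A} R_{\widetilde{B}|A}(\cl{f}) \right), \]
where $\sigma_{s'} = \rho_{s'} \circ \pi_{s'}$ and $J \colon \otimes_{s' \in S', A} B_{s'} \to \widetilde{B}$ is the insertion of the $S'$-indexed tensor factors. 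The role of the hypothesis that each unit $A \to B_s$ admits an $A$-linear left inverse $k_s$ is precisely that $J$ then also does — use $\otimes_{s \in S \setminus S'} k_s$ on the remaining factors — so $J_{*}$ is injective on $\text{Ext}$-groups. Therefore, whenever $\bigotimes_{s' \in S', A} (h_{s'} \circ \cl{\Psi_{s'}}) \neq 0$ the left-hand side is nonzero, hence so is the right-hand side, hence so is the pulled-back class on the right, hence $\mu^{|S'|-1}_{\widetilde{B}/A} \circ \bigotimes_{s' \in S', A} R_{\widetilde{B}|A}(\cl{f}) \neq 0$; part (i) then completes the argument.

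The conceptual step, which I expect to be the main obstacle, is recognizing that (ii) should be reduced to (i) by choosing $\widetilde{B} = \otimes_{s} B_s$ and all $\eta_i = \cl{f}$; once this is in place, the remaining work is the diagram chase above, whose one genuinely load-bearing ingredient is the existence of the left inverses $k_s$ used to split the insertion $J$. The minor points to be careful about are the commutativity of the cube of short exact sequences relating $\Psi_{s'}$, $f_{s'}$ and $f$, and the behaviour of the external-product pairing under pushforward and pullback.
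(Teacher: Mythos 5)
Your proposal is correct and takes essentially the same route as the paper: part (i) via the lax symmetric monoidal structure of restriction of scalars (the paper invokes the bar construction to produce the same commutative square and argues by contrapositive), and part (ii) by reducing to (i) with every $\eta_i = \cl{\otimes_{s \in S,A} f_s}$, relating $\mu^{|S'|-1}\circ\bigotimes R(\cl{\otimes f_s})$ to $\bigotimes_{s'}(h_{s'}\circ\cl{\Psi_{s'}})$ through the cokernel maps and the insertions of the $S'$-factors, and using the splittings $k_s$ of the units to make that insertion split injective on $\text{Ext}$. Your bookkeeping of the splitting (identity on the $S'$-factors, $k_{s''}$ on the remaining ones) is in fact the correct reading of the paper's $k_{S'}$, whose stated indexing appears to be transposed.
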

\begin{proof}
For (i), we remark more generally that for any pair of morphisms $f: N_1 \to K_1, g: N_2 \to K_2$ in $D(B)$, the bar construction of \cite[Chapter 4.4.2]{LurieHA} gives a homotopy coherent diagram:
\[\begin{tikzcd}
N_1 \otimesl_B K_1 \arrow[r, "f \otimesl_B g"] & N_2 \otimesl_B K_2 \\
N_1 \otimesl_A K_1 \arrow[u] \arrow[r, "f \otimesl_A g"] & N_2 \otimesl_A K_2 \arrow[u]
\end{tikzcd}\]
Applying this to the $\eta_i$ repeatedly, we end up with a homotopy commutative diagram:
\[\begin{tikzcd}
\otimesl_{i, B} M_i[-1] \arrow[r, "\otimesl_{i,B} \eta_i"] & B^{\otimesl_B n} \arrow[r, "\simeq"] & B \\
\otimesl_{i, A} M_i[-1] \arrow[u, "\varphi"] \arrow[r,swap, "\otimesl_{i,B} R_{B|A}(\eta_i)", {yshift=-3pt}] & B^{\otimesl_A n} \arrow[u] \arrow[ru, swap,"\mu^{n-1}_{B/A}"] &
\end{tikzcd}\]
We therefore see that:
\[\otimesl_{i,B} \eta_i =0 \implies (\otimesl_{i,B} \eta_i) \circ \varphi =0 \implies \mu^{n-1}_{B/A} \circ \otimesl_{i,A} R_{B|A}(\eta_i) = 0.\]
For (ii), let $B_{S} = \otimes_{s \in S} B_{s}$,  $i_s: B_s \to B_S$ be the inclusion in the $s^{\text{th}}$-factor, and $g_s: \coker{f_s} \to \coker{\otimes_{s \in S} f_s}$ the induced map. Note that:
\[\bigotimes_{s' \in S',A} (h_{s'} \circ \cl{\Psi_{s'} }) \neq 0 \implies \bigotimes_{s' \in S',A} \cl{f_{s'}} \neq 0.\]
By (i) it suffices to show that:
\[\mu^{|S'|-1}_{B_{S}/A} \circ \bigotimes_{s' \in S', A} R_{B_{S}|A}(\cl{\otimes_{s \in S} f_s}) \neq 0.\]
This follows from the following calculation:
\begin{align*}k_{S'} \circ \mu^{|S'|-1}_{B_{S}/A} \circ \bigotimes_{s' \in S', A} R_{B_{S}|A}(\cl{\otimes_{s \in S} f_s}) \circ \bigotimes_{s' \in S', A} g_{s'} &= k_{S'} \circ \mu^{|S'|-1}_{B_{S}/A} \circ \bigotimes_{s' \in S', A} (R_{B_{S}|A}(\cl{\otimes_{s \in S} f_s}) \circ g_{s'})\\
& = k_{S'} \circ \mu^{|S'|-1}_{B_{S}/A} \circ \bigotimes_{s' \in S'} (i_{s'} \circ \cl{f_{s'}})\\
&= \left (k_{S'} \circ \mu^{|S'|-1}_{B_{S}/A} \circ \bigotimes_{s' \in S'} i_{s'}\right) \circ \bigotimes_{s' \in S'} \cl{f_{s'}}\\
&= \bigotimes_{s' \in S'} \cl{f_{s'}} \\
&\neq 0,
\end{align*}
where $k_{S'}: \otimes_{s \in S} R \to \otimes_{s \in S \setminus S'}R$ is defined as the tensor product of the $A$-linear maps $\text{id}_{B_{s''}}: B_{s''} \to B_{s''}$ and $k_{s'}$ for each $s'' \in S \setminus S'$ and $s' \in S'$. \\
\end{proof}
Before stating our main result, we need to recall a few definitions.
\begin{definition}\label{definition:pboolean}
A ring $R$ is said to be \textit{p-boolean} if $R$ is in characteristic $p$ and for every $r \in R, r^p=r$.
\end{definition}
The following is a discrete version of a derived result appearing in \cite[Theorem 4.6]{pbool}.
\begin{theorem}\label{theorem:p-booleanmonadic}
Let $R$ be a $p$-boolean ring. Let $\text{CAlg}_{R}$ (resp. $\text{CAlg}^{\text{perf}}_{R}$, $\text{CAlg}^{\phi=1}_{R}$) denote the category of (commutative) $R$-algebras (resp. perfect $R$-algebras, $p$-boolean $R$-algebras). Each of the forgetful functors:
\[\text{CAlg}^{\phi=1}_{R} \to \text{CAlg}^{\text{perf}}_{R} \to \text{CAlg}_{R} \to \text{Mod}_R\]
are monadic. In particular, the left adjoint of this forgetful functor is the functor $F_R^p: \text{Mod}_R \to \text{CAlg}^{\phi=1}_{R}$ defined on objects by the rule:
\[M \mapsto \text{Sym}_R(M) \mapsto \text{Sym}_R(M)^{\text{perf}} \mapsto \text{Coeq}(\begin{tikzcd}\text{Sym}_R(M)^{\text{perf}} \ar[r,shift left=.5ex,"\phi"]
  \ar[r,shift right=.5ex,swap,"\text{id}"] & \text{Sym}_R(M)^{\text{perf}} \end{tikzcd}),\]
where $(-)^{\text{perf}}$ is the colimit perfection.
\end{theorem}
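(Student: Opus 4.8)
The plan is to establish monadicity separately for each of the three functors in the chain and then assemble the composite $U\colon\text{CAlg}^{\phi=1}_R\to\text{Mod}_R$ by a direct appeal to Beck's monadicity theorem. The rightmost functor $\text{CAlg}_R\to\text{Mod}_R$ is monadic by the classical identification of commutative $R$-algebras with the Eilenberg--Moore category of the symmetric-algebra monad $\operatorname{Sym}_R$: the free algebra on $M$ is $\operatorname{Sym}_R(M)$, the functor is conservative, and it creates coequalizers of $U$-split pairs because such coequalizers are absolute, so the module-level coequalizer inherits a unique compatible $R$-algebra structure. I would simply cite this as standard.

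Next I would observe that the other two functors are inclusions of full reflective subcategories, which suffices because a full reflective subcategory inclusion is always monadic: the induced monad $T=iL$ ($L$ the reflector) is idempotent since the counit $Li\to\operatorname{id}$ is an isomorphism, and the Eilenberg--Moore category of an idempotent monad is the full subcategory of local objects, which here is exactly the given subcategory. This is where $p$-booleanness of $R$ enters. For $\text{CAlg}^{\text{perf}}_R\hookrightarrow\text{CAlg}_R$: since $R$ is $p$-boolean its Frobenius is the identity, so $R$ is a perfect ring and the Frobenius of any $R$-algebra is $R$-linear; hence the colimit perfection $A\mapsto A^{\text{perf}}=\operatorname{colim}(A\xrightarrow{\phi}A\xrightarrow{\phi}\cdots)$ stays among $R$-algebras and is perfect, and for perfect $B$ precomposition with $\phi_A$ acts bijectively on $\operatorname{Hom}_R(A,B)$ (because it equals postcomposition with the invertible $\phi_B$), so $\operatorname{Hom}_R(A^{\text{perf}},B)=\operatorname{Hom}_R(A,B)$ and $A\to A^{\text{perf}}$ is the reflection. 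For $\text{CAlg}^{\phi=1}_R\hookrightarrow\text{CAlg}^{\text{perf}}_R$: the quotient $A\mapsto A/(\phi(a)-a)=\operatorname{Coeq}(\phi,\operatorname{id})$ is $p$-boolean, and any $R$-algebra map from $A$ to a $p$-boolean ring $B$ automatically kills each $\phi(a)-a$ (as $\phi_B=\operatorname{id}$), so this quotient is the reflection.

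Finally, for the composite $U\colon\text{CAlg}^{\phi=1}_R\to\text{Mod}_R$: it has a left adjoint, namely the composite $\operatorname{Coeq}(\phi,\operatorname{id})\circ(-)^{\text{perf}}\circ\operatorname{Sym}_R$ of the three left adjoints above, which is precisely the displayed functor $F_R^p$ (and this already proves the ``in particular'' clause). It is conservative. For the remaining Beck hypothesis, given a $U$-split parallel pair $f,g\colon A\to B$ in $\text{CAlg}^{\phi=1}_R$, I would view it as a $U'$-split pair in $\text{CAlg}_R$, where $U'\colon\text{CAlg}_R\to\text{Mod}_R$; by monadicity of $U'$ the module-level split coequalizer lifts uniquely to a coequalizer $q\colon B\to Q$ in $\text{CAlg}_R$ with $q$ a surjective ring map, whence $Q$, being a quotient of the $p$-boolean ring $B$, is itself $p$-boolean. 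Fullness of $\text{CAlg}^{\phi=1}_R$ in $\text{CAlg}_R$ then makes $q$ the coequalizer of $f,g$ in $\text{CAlg}^{\phi=1}_R$ as well, and uniqueness of the lift is inherited from $\text{CAlg}_R$. This verifies creation of $U$-split coequalizers, so $U$ is monadic.

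I expect the one genuinely non-formal point to be this last step: a composite of monadic functors need not be monadic, so the content is exactly that the class of $p$-boolean $R$-algebras is stable under the quotient operations that Beck's theorem produces, which is what lets the composite forgetful functor satisfy the creation-of-coequalizers hypothesis. Everything else is bookkeeping about reflective subcategories and the behaviour of Frobenius over a perfect base. (One could alternatively deduce the statement by extracting the discrete part of \cite[Theorem 4.6]{pbool}, but the direct argument above seems cleaner and more self-contained.)
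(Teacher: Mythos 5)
Your argument is correct, and it is worth noting that the paper itself supplies no proof of this statement at all: it simply remarks that the theorem is a discrete version of the derived result in \cite[Theorem 4.6]{pbool} and moves on. So your write-up is not so much a different route as the only actual proof on the table. The decomposition you use is the natural one and all three pieces check out: $\text{CAlg}_R\to\text{Mod}_R$ is monadic via $\operatorname{Sym}_R$; the hypothesis that $R$ is $p$-boolean is exactly what makes Frobenius $R$-linear on every $R$-algebra, so that colimit perfection and the quotient by $(a^p-a)$ are reflectors onto full subcategories, and reflective inclusions are monadic via the idempotent-monad description of their Eilenberg--Moore categories. You are also right to flag that the composite is the non-formal point, and your resolution --- that the $U$-split coequalizer produced in $\text{CAlg}_R$ is a surjective quotient of a $p$-boolean ring, hence again $p$-boolean, so fullness transports the coequalizer back into $\text{CAlg}^{\phi=1}_R$ --- is exactly the stability property that makes Beck's theorem apply to the composite. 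The identification of the left adjoint as the composite of the three left adjoints gives the displayed formula for $F_R^p$. The trade-off between the two approaches is simply self-containedness versus brevity: the paper's citation imports the statement from a derived setting and leaves the reader to extract the discrete case, whereas your argument proves it directly with only standard category theory.
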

\begin{proposition}\label{proposition:symformula}
Let $R$ be a ring and $f: M \to N$ an injective map of flat $R$-modules such that $K=\coker{f}$ is $R$-flat. Then 
\[\text{Sym}_R(f): \text{Sym}_R(M) \to \text{Sym}_R(N)\]
is a faithfully flat map of $R$-algebras. Furthermore, if $R$ is $p$-boolean, then $F_R^p(f)$ is a faithfully flat map of $p$-boolean rings.
\end{proposition}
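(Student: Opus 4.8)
The plan is to reduce to the case of a split injection by Lazard's theorem, and then push everything through the left adjoints $\text{Sym}_R$ and $F_R^p$. First I would use flatness of $K=\coker{f}$: by Lazard's theorem write $K=\colim{i\in I}K_i$ as a filtered colimit of finite free $R$-modules, and set $N_i:=N\times_K K_i$, the fibre product along the surjection $N\to K$ and the structure map $K_i\to K$. Since filtered colimits commute with finite limits one checks directly that $0\to M\to N_i\to K_i\to 0$ is exact with $\colim{i\in I}N_i=N$ compatibly with $f$; because $K_i$ is projective this sequence splits, and the splitting can be chosen so that $f_i\colon M\to N_i$ is identified with the inclusion $M\hookrightarrow M\oplus K_i$. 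So $f$ is exhibited as a filtered colimit of split injections $M\hookrightarrow M\oplus K_i$ with $K_i$ finite free.

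Next I would apply $\text{Sym}_R$. As a left adjoint to the forgetful functor $\text{CAlg}_R\to\text{Mod}_R$ it preserves filtered colimits and carries $\oplus$ of modules to $\otimes_R$ of algebras, so $\text{Sym}_R(f)$ becomes the filtered colimit of the maps
\[\text{Sym}_R(M)\longrightarrow \text{Sym}_R(M)\otimes_R\text{Sym}_R(K_i)=\text{Sym}_R(M)[x_1,\dots,x_{r_i}],\]
each of which is faithfully flat, being a polynomial extension of $\text{Sym}_R(M)$. I then invoke the elementary fact that a filtered colimit of faithfully flat $A$-algebras is faithfully flat over $A$: flatness is clear (filtered colimit of flat modules), and for surjectivity of $\text{Spec}$ one observes that $B\otimes_A\kappa(\mathfrak p)$ is the filtered colimit of the nonzero rings $B_i\otimes_A\kappa(\mathfrak p)$, and a filtered colimit of nonzero rings is nonzero (the unit cannot be killed along ring maps). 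This yields that $\text{Sym}_R(f)$ is faithfully flat, and injectivity of $\text{Sym}_R(f)$ follows as well since split injections are preserved.

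For the $p$-boolean statement I would run the identical argument with $F_R^p$ in place of $\text{Sym}_R$. By Theorem~\ref{theorem:p-booleanmonadic}, $F_R^p$ is a left adjoint, so it preserves filtered colimits and sends $\oplus$ of $R$-modules to the coproduct in $\text{CAlg}^{\phi=1}_R$; that coproduct is $\otimes_R$, since $\text{CAlg}^{\phi=1}_R\subset\text{CAlg}_R$ is closed under $\otimes_R$ (in characteristic $p$ the Frobenius is a ring homomorphism, so $A\otimes_R B$ is $p$-boolean whenever $A,B$ are). Moreover $F_R^p(R^{r})=F_R^p(R)^{\otimes_R r}$ with $F_R^p(R)=R[\theta]/(\theta^p-\theta)$, the free $p$-boolean $R$-algebra on one generator — immediate from its universal property, or from the explicit formula of Theorem~\ref{theorem:p-booleanmonadic} — which is finite free over $R$ of rank $p^{r}$. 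Hence $F_R^p(f)$ is the filtered colimit of the faithfully flat maps $F_R^p(M)\to F_R^p(M)\otimes_R F_R^p(K_i)$, and is therefore faithfully flat by the same lemma; its source and target are $p$-boolean by construction of $F_R^p$.

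The hard part, such as it is, is entirely in the first step: arranging $f$ as a genuine filtered colimit of split inclusions with all the maps correctly identified (the pullback bookkeeping), together with the observation that although faithful flatness does \emph{not} in general survive a filtered colimit of modules, it \emph{does} survive a filtered colimit of $A$-algebras, because surjectivity of $\text{Spec}$ is detected on residue fields and a filtered colimit of nonzero rings is nonzero. Everything after that is formal manipulation with the left adjoints $\text{Sym}_R$ and $F_R^p$.
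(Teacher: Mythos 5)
Your proposal is correct and follows essentially the same route as the paper: Lazard's theorem applied to $K$, the fibre products $N_i = N\times_K K_i$ giving split sequences $0\to M\to N_i\to K_i\to 0$, hence $\text{Sym}_R(N_i)=\text{Sym}_R(M)\otimes_R\text{Sym}_R(K_i)$ (and likewise for $F_R^p$), and passage to the filtered colimit. The only difference is that you spell out why faithful flatness survives the filtered colimit of algebras, which the paper leaves implicit.
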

\begin{proof}
By Lazard's theorem, $K=\colim{i\in I} K_i$ where $I$ is a filtered indexing category and $K_i$ is a finite free $R$-module. If we define $N_i = N \times_{K} K_i$, then
\[\colim{i \in I} N_i = \colim{i \in I} N \times_{K} K_i = N \times_{K} \colim{i \in I} K_i = N,\]
and we have an exact sequence
\[0 \to M \over{f_i}{\to} N_i \to K_i \to 0,\]
which is split. Therefore:
\[\text{Sym}_R(N_i) = \text{Sym}_R(M) \otimes_R \text{Sym}_R(K_i)\]
and if $R$ is $p$-boolean:
\[F^p_R(N_i) = F_R^p(M) \otimes_R F_R^p(K_i).\]
Note that as $K_i$ are finite free, the unit $R \to \text{Sym}_R(K_i)$ is faithfully flat, and if $R$ is futher assumed to be $p$-boolean, then likewise for $R \to F^p_R(K_i)$.\footnote{If $K_i =R^{\oplus n}$, then $F^p_R(K_i) = R[X_1,...,X_n]/(X_1^p-X_1, ..., X_n^p-X_n)$.} Hence the map:
\[\text{Sym}_R(M) \over{\text{Sym}_R(f_i)}{\to} \text{Sym}_R(N_i)\]
is faithfully flat, and if $R$ is $p$-boolean, then:
\[F^p_R(M) \over{F^p_R(f_i)}{\to} F^p_R(N_i)\]
is also faithfully flat. The result follows by noting $\text{Sym}_R(f) = \colim{i\in I}\text{Sym}_R(f_i)$ and if $R$ is $p$-boolean, $F^p_R(f) = \colim{i \in I} F^p_R(f_i)$.
\end{proof}
Our main result on descendability of faithfully flat ring maps can then be stated as follows.
\begin{theorem}\label{theorem:mainindiv}
Let $n \in \bb{N} \cup \{\infty\}$, $R$ be a commutative ring and $\{\{S_i, \Psi_i: S_i \to R\}\}_{i \in [n]}$ an $n$-indivisible sequence, and $|S_i| \ge \aleph_{i-1}$. Then the ring map 
\[\bigotimes_{i=1, R}^n \text{Sym}_R(\Psi_{R,S_i}): \bigotimes_{i=1, R}^n \text{Sym}_R(\oplus_{S_i} R) \to \bigotimes_{i=1, R}^n \text{Sym}_R(R \oplus \bigoplus_{S_i} R),\]
is faithfully flat with exponent of descendability $n$. In fact, the ring map:
\[\begin{tikzcd}R\{\underset{i=1}{\amalg^n}S_i\}\arrow[r, "\underset{i=1,R}{\bigotimes^n} \text{Sym}_R(\Psi_{R,S_i}) \underset{\underset{i=1,R}{\bigotimes^n} \text{Sym}_R(\oplus_{S_i} R)}{\bigotimes}  R\{\underset{i=1}{\amalg^n}S_i\}"] &[8em] \underset{i=1,R}{\bigotimes^n} \text{Sym}_R(R \oplus \bigoplus_{S_i} R)  \underset{\underset{i=1,R}{\bigotimes^n} \text{Sym}_R(\oplus_{S_i} R)}{\bigotimes}R\{\underset{i=1}{\amalg^n}S_i\}\end{tikzcd},\]
also has descendability exponent $n$ too.\\
\indent If $R$ is $p$-boolean, then the ring map:
\[\bigotimes_{i=1, R}^n F^p_R(\Psi_{R,S_i}): \bigotimes_{i=1, R}^n F^p_R(\oplus_{S_i} R) \to \bigotimes_{i=1, R}^n F^p_R(R \oplus \bigoplus_{S_i} R),\]
is a faithfully flat of $p$-boolean rings of descendablity exponent $n$.
\end{theorem}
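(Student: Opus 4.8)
The plan is to obtain the $p$-boolean assertion as a direct application of Theorem~\ref{theorem:indivalg}, fed with the data produced by Proposition~\ref{proposition:symformula} and the free-$p$-boolean-algebra adjunction of Theorem~\ref{theorem:p-booleanmonadic}; the argument runs exactly parallel to the $\mathrm{Sym}_R$-based proof of the first two assertions, only with $F^p_R$ in place of $\mathrm{Sym}_R$. Write $M_i^{0} = \oplus_{S_i} R$ and $M_i^{1} = R \oplus \bigoplus_{S_i} R$, so that $\Psi_{R,S_i} \colon M_i^{0} \to M_i^{1}$ is, by $1$-indivisibility of $\{S_i,\Psi_i\}$, an injective map of flat $R$-modules with $R$-flat cokernel.

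The first point to record is that $R\{S_i\}$ is $p$-boolean: it is an $R$-algebra, hence of characteristic $p$, and it is generated as a ring by $R$ together with the elements $X_s$, which satisfy $X_s^{2} = X_s$ and therefore $X_s^{p} = X_s$; since in characteristic $p$ the Frobenius $x \mapsto x^{p}$ is a ring endomorphism, and it fixes $R$ pointwise (as $R$ is $p$-boolean) and fixes each $X_s$, it must be the identity on $R\{S_i\}$. The same Frobenius argument shows that a finite tensor product over $R$ of $p$-boolean $R$-algebras is again $p$-boolean (and for $n=\infty$ one passes to the evident filtered colimit).

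Now apply Proposition~\ref{proposition:symformula} to $\Psi_{R,S_i}\colon M_i^{0}\to M_i^{1}$: since $R$ is $p$-boolean, $f_i := F^p_R(\Psi_{R,S_i})\colon F^p_R(M_i^{0})\to F^p_R(M_i^{1})$ is a faithfully flat map of $p$-boolean $R$-algebras. Set $A_i = F^p_R(M_i^{0})$ and $B_i = F^p_R(M_i^{1})$, and let $g_i\colon A_i\to R\{S_i\}$ be the $R$-algebra map which, under the adjunction of Theorem~\ref{theorem:p-booleanmonadic} (applicable because $R\{S_i\}$ is $p$-boolean), is adjunct to the $R$-linear map $R(\iota_{S_i})\colon M_i^{0}\to R\{S_i\}$; by construction $g_i\circ u_{M_i^{0}} = R(\iota_{S_i})$, where $u$ denotes the unit $\mathrm{id}\Rightarrow F^p_R$. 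Naturality of $u$ along $\Psi_{R,S_i}$ gives $f_i\circ u_{M_i^{0}} = u_{M_i^{1}}\circ \Psi_{R,S_i}$, so the square and triangle constituting the hypothesis diagram of Theorem~\ref{theorem:indivalg} commute, with vertical maps $u_{M_i^{0}}$ and $u_{M_i^{1}}$ and bent arrow $R(\iota_{S_i}) = g_i\circ u_{M_i^{0}}$.

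Since $|S_i|\ge \aleph_{i-1}$ for all $i\in[n]$ and each $f_i$ is faithfully flat, Theorem~\ref{theorem:indivalg} then yields that $\bigotimes_{i\in[n],R} f_i$, which is precisely $\bigotimes_{i=1,R}^{n} F^p_R(\Psi_{R,S_i})$, is faithfully flat of descendability exponent $n$; and both its source and target are $p$-boolean by the tensor-product remark. I do not expect a genuine obstacle here: the only steps needing care are verifying that $R\{S_i\}$ and the ambient tensor products are $p$-boolean (so that the $F^p_R$-adjunction may legitimately be invoked to build $g_i$ and to conclude the rings are $p$-boolean), and matching the unit and naturality of $F^p_R$ against the precise shape of the diagram in Theorem~\ref{theorem:indivalg} — all the substantive non-vanishing content is already packaged inside Theorems~\ref{theorem:indivisiblecup} and~\ref{theorem:indivalg}.
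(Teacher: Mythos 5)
Your proposal is correct and follows essentially the same route as the paper: the paper's proof likewise just records the commutative triangles $\oplus_{S_i}R \to F^p_R(\oplus_{S_i}R) \to R\{S_i\}$ (resp.\ through $\mathrm{Sym}_R$) factoring $R(\iota_{S_i})$, and then invokes Proposition~\ref{proposition:symformula} together with Theorem~\ref{theorem:indivalg}. Your write-up merely makes explicit what the paper leaves implicit — that $R\{S_i\}$ is $p$-boolean, that $g_i$ is the adjunct of $R(\iota_{S_i})$ under Theorem~\ref{theorem:p-booleanmonadic}, and that the hypothesis diagram of Theorem~\ref{theorem:indivalg} commutes by naturality of the unit — while the second displayed assertion (the map out of $R\{\amalg_i S_i\}$) is, as you suggest, exactly the base-changed map $\otimes_i h_i$ handled inside the proof of Theorem~\ref{theorem:indivalg}.
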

\begin{proof}
After noting commutative diagrams:
\[\begin{tikzcd} \oplus_{S_i} R \arrow[r] \arrow[rd]\arrow[rr, bend left, "R(\iota_{S_i})"]&  \text{Sym}_R(\oplus_{S_i} R) \arrow[r]& R\{S_i\}\\
& F^p_R(\oplus_{S_i} R) \arrow[ru]&
\end{tikzcd}\]
the result follows from combining Proposition~\ref{proposition:symformula} and Theorem~\ref{theorem:indivalg}.
\end{proof}
\begin{corollary}\label{corollary:mainindiv}
For any $n \in \bb{N} \cup \{\infty\}$:
\begin{enumerate}
\item[(i)] There exists a faithfully flat ring map $A \to A'$ between $\aleph_{n-1}$-Noetherian rings, with $A$ of Krull-dimension $n$, which has descendability exponent $n$. 
\item[(ii)] There exists a faithfully flat ring map $A \to A'$ between $\text{min}(\beth_{n-1}, 2^{\aleph_{n-1}})$-countable $p$-boolean rings which has descendability exponent $n$.
\end{enumerate}
\end{corollary}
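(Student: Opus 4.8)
The plan is to read off both parts from Theorem~\ref{theorem:mainindiv} by applying it to the two families of indivisible sequences of Example~\ref{example:indiv}, the rest being cardinality and Krull-dimension bookkeeping. Throughout I write $\kappa = \aleph_{n-1}$ for $n$ finite and $\kappa = \aleph_{\omega}$ for $n = \infty$, so that $\kappa \ge \aleph_{i-1}$ for every $i \in [n]$.

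For part (i) I would fix a field $k$ with $|k| = \kappa$, set $R = k[x_1, \dots, x_n]$ (with $\bb{N}$-many variables when $n = \infty$), and equip it with the $n$-indivisible sequence $\{\{k, \Psi_i : a \mapsto x_i - a\}\}_{i \in [n]}$ of Example~\ref{example:indiv}(i), which is legitimate since $|k| = \kappa \ge \aleph_{i-1}$. I would then let $A \to A'$ be the \emph{second} ring map displayed in Theorem~\ref{theorem:mainindiv}, which that theorem guarantees is faithfully flat of descendability exponent $n$ and whose source is $A = R\{\amalg_{i \in [n]} k\}$. It then remains to check two things. First, $\dim A = n$: every polynomial generator of $R\{T\}$ over $R$ is a root of the monic polynomial $T^2 - T$, so $R\{T\}$ is finite free over $R$ for every finite $T$; hence $A = \varinjlim_{T} R\{T\}$ is a filtered colimit of finite (in particular integral) $R$-algebras, so $A$ is integral over $R$, and incomparability along integral maps forces $\dim A = \dim R = n$. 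Second, $|A| = |A'| = \kappa$: the monomials in the $x_i$ and in the $Y$-generators each number only $\kappa$, and a finite relative tensor product of rings of size $\le \kappa$ again has size $\le \kappa$; so every ideal of $A$ and of $A'$ is $\le \kappa$-generated, i.e. both are $\aleph_{n-1}$-Noetherian. (For $n = \infty$ this reads as a non-descendable faithfully flat map of $\aleph_{\omega}$-countable rings with $A$ of infinite Krull dimension.)

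For part (ii) I would take $R_0$ to be the $\bb{F}_p$-subalgebra of $\prod_{s \in S} \bb{F}_p$ generated by $1$ and the atomic idempotents $e_s$ — equivalently, the eventually-constant $\bb{F}_p$-valued functions on $S$ — with $|S| = \kappa$; this $R_0$ is $p$-boolean, has cardinality $\kappa$, and carries the $\kappa$-many nonzero orthogonal idempotents $\{e_s\}$. Putting $R = R_0^{\otimes_{\bb{F}_p} n}$ (again $p$-boolean of cardinality $\kappa$) and noting that $\bb{F}_p \to R_0/(1 - e_s) \cong \bb{F}_p$ is faithfully flat, Example~\ref{example:indiv}(ii) together with Proposition~\ref{proposition:indivequiv}(ii) supplies an $n$-indivisible sequence $\{\{S, t_i \circ \Psi : s \mapsto 1 - e_s\}\}_{i \in [n]}$ in $R$ with $|S| = \kappa \ge \aleph_{i-1}$. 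I would then let $A \to A'$ be the \emph{third} ring map in Theorem~\ref{theorem:mainindiv}, namely $\bigotimes_{i \in [n], R} F^p_R(\Psi_{R,S})$, which is faithfully flat of $p$-boolean rings with descendability exponent $n$. Finally $F^p_R(\oplus_S R) \cong R[X_s]_{s \in S}/(X_s^p - X_s)$ (and likewise for $F^p_R(R \oplus \oplus_S R)$), so these have cardinality $\max(|R|, |S|) = \kappa$, whence $|A| = |A'| \le \kappa \le \min(\beth_{n-1}, 2^{\aleph_{n-1}})$, the asserted bound (which also accommodates the cruder choice $R_0 = \prod_{s \in S} \bb{F}_p$ of cardinality $2^{\aleph_{n-1}}$).

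The substantive content lies entirely in Theorem~\ref{theorem:mainindiv}; what remains is bookkeeping, and the two steps needing care are: in (i), using the quotient presentation $A = R\{\amalg_i k\}$ (the source of the second map), which is integral over $k[x_1, \dots, x_n]$ and hence of Krull dimension exactly $n$, instead of the polynomial presentation $\bigotimes_R \sym_R(\oplus_k R)$, which has infinite Krull dimension; and in (ii), producing a $p$-boolean ring with $\aleph_{n-1}$ nonzero orthogonal idempotents that is itself small and checking that $F^p_R$ and the finitely many relative tensor products do not inflate cardinality past $\min(\beth_{n-1}, 2^{\aleph_{n-1}})$. I expect the latter to be the only point with any subtlety.
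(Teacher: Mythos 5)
Part (i) is essentially the paper's own proof: the same choice $R=k[x_1,\dots,x_n]$ with the indivisible sequence of Example~\ref{example:indiv}(i) and the second map of Theorem~\ref{theorem:mainindiv}, whose source is $R\{k^{\amalg n}\}$. Your integrality argument for $\dim A=n$ (each $R\{T\}$, $T$ finite, is finite free over $R$, so $A$ is integral over $R$) correctly fills in what the paper dismisses as ``clear,'' and the $\aleph_{n-1}$-Noetherian bookkeeping matches.

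Part (ii) genuinely diverges from the paper, and in a way you must confront head-on. The paper produces its ring of orthogonal idempotents in two ways --- the full product $\text{Hom}_{\text{Set}}(S,\bb{F}_p)$, of cardinality $2^{\aleph_{n-1}}$, and an inductive construction $R_n=\prod_{S_{n-1}}R_{n-1}$ of cardinality $\beth_{n-1}$ --- and its stated bound is the minimum of these. You instead take the finite--cofinite subalgebra $R_0\subset\prod_S\bb{F}_p$ of eventually constant functions, which has cardinality only $\aleph_{n-1}$ while still carrying $\aleph_{n-1}$ orthogonal idempotents $e_s$ with $R_0/(1-e_s)\cong\bb{F}_p$. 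If this is admissible, your final rings have cardinality $\aleph_{n-1}$, which is strictly stronger than the corollary and, more to the point, settles exactly the case the introduction declares open (``Our second result would be optimal if we could arrange for the map $A\to A'$ to be between $p$-boolean rings of cardinality $\le\aleph_{n-1}$''; this gap is the stated reason for citing Aoki's $<\aleph_{2n}$ bound). I cannot point to a hypothesis of Example~\ref{example:indiv}(ii), Proposition~\ref{proposition:indivequiv}(ii), or Theorems~\ref{theorem:indivisiblecup} and~\ref{theorem:mainindiv} that your $R_0$ fails --- the quantitative hypotheses constrain only the $|S_i|$, never $|R|$, and over a $p$-boolean ring universal injectivity of $\Psi_{R,S}$ reduces to plain injectivity --- so either you have found an improvement the author missed, or there is an obstruction neither of us has located. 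You cannot leave this tension unremarked: as written, your proof implicitly resolves the paper's open problem as a bookkeeping aside. Either fall back on the paper's two constructions (in which case note that you currently say nothing about the $\beth_{n-1}$ half of the minimum, which in the paper requires the separate inductive claim), or isolate the stronger $\aleph_{n-1}$ statement and verify it with the scrutiny it deserves.
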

\begin{proof}
For (i), we may use the first example of \ref{example:indiv}. In particular, if we set $R = k[x_1,x_2,...,x_n]$, then from Theorem~\ref{theorem:mainindiv} we may conclude that there is a faithfully flat map:
\[k\{k^{\amalg n}\}[x_1,...,x_n] = R\{k^{\amalg n}\} \to R\{k_{+}^{\amalg n}\}= A'\]
which has descendability exponent $n$ when $|k| \ge \aleph_{n-1}$, where the set $k_+ = \{*\}\amalg k$. Furthermore, it's clear that $R\{k^{\amalg n}\}$ has Krull-dimension $n$, and that the rings are $\aleph_{n-1}$-Noetherian.\\
\indent For (ii), by using Theorem~\ref{theorem:mainindiv} and Proposition~\ref{proposition:indivequiv}, it suffices to find a $1$-indivisible sequence $\Psi: S \to R$ where the image of $S$ is a set of orthogonal idempotents in $R$ with $|S|=\aleph_{n-1}$ and $R$ is a $p$-boolean ring with $|R|=2^{\aleph_{n-1}}$ or $\beth_{n-1}$.\\
\indent If $S$ is an arbitrary set with $|S|=\aleph_{n-1}$, then $R= \text{Hom}_{\text{Set}}(S, \bb{F}_p)$ is a $p$-boolean ring with $|R|=2^{\aleph_{n-1}}$ containing the set $\{\delta_{t=s}\}_{s \in S}\subset R$ of orthogonal idempotents.
\begin{claim}
There exists a $p$-boolean ring $R$ with $|R| = \beth_n$ and a subset $S \subset R$ consisting of orthogonal idempotents with $|S|=\beth_n$.
\end{claim}
\begin{proof}
We will prove the claim by induction on $n$. In the case $n=0$, we remark that the $p$-boolean ring $R_0=F_{\bb{F}_p}^p(R_0^{\oplus \bb{N}})=\bb{F}_p[X_1,X_2,...]/(X_1^p-X_1,X_2^p-X_2,...)$ admits a countably infinite subset of orthogonal idempotents given by a sequence $S_0=\{a_n\}_{n \in \bb{N}}$ of elements of $R$ defined by $a_n = (1-X^{p-1}_n) \prod_{0 \le i \le n-1} X^{p-1}_{i}$. The sequence $\{S_0, a: S_0 \to R_0\}$ solves the $n=0$ base-case.\\
\indent Suppose by induction we have a ring $R_{n-1}$ such that $|R_{n-1}| = \beth_{n-1}$ and that there is a subset $S_{n-1} \subset R_{n-1}$ consisting of orthogonal idempotents with $|S_{n-1}| = \beth_{n-1}$. Consider the ring:
\[R_n=\prod_{S_{n-1}}R_{n-1}.\]
We note $|R_n| = \beth_{n-1}^{\beth_{n-1}} = \beth_n$, and the set
\[S_n = \prod_{S_{n-1}} S_{n-1} \subset R_n\]
consists of orthogonal idempotents and $|S_n|=\beth_{n-1}^{\beth_{n-1}}=\beth_n$, as desired.
\end{proof}
\end{proof}

\bibliographystyle{amsalpha}
\bibliography{ffdescent}
\end{document}